\def\frak{\mathfrak}
\def\Bbb{\mathbb}
\def\Cal{\mathcal}
\let\phi\varphi
\newcommand{\x}{\times}
\renewcommand{\o}{\circ}
\newcommand{\al}{\alpha}
\newcommand{\be}{\beta}
\newcommand{\la}{\lambda}
\newcommand{\ph}{\phi}
\newcommand{\ps}{\psi}
\newcommand{\si}{\sigma}
\newcommand{\ze}{\zeta}
\newcommand{\Ga}{\Gamma}
\newcommand{\La}{\Lambda}
\newcommand{\Ph}{\Phi}
\newcommand{\Om}{\Omega}
\def\Rho{\mbox{\textsf{P}}}
\newcommand{\im}{\operatorname{im}}
\newcommand{\Ric}{\operatorname{Ric}}
\newcommand{\Sc}{\operatorname{Sc}}
\newcommand{\End}{\operatorname{End}}
\newcommand{\id}{\operatorname{id}}
\newcommand{\tr}{\operatorname{tr}}
\renewcommand{\div}{\operatorname{div}}
\newcounter{theorem}
\numberwithin{theorem}{section}
\numberwithin{equation}{section}
\newtheorem{thm}[theorem]{Theorem}
\newtheorem*{thm*}{Theorem \thesubsection}
\newtheorem{prop}[theorem]{Proposition}
\newtheorem*{lemma*}{Lemma \thesubsection}
\newtheorem*{prop*}{Proposition \thesubsection}
\newtheorem*{cor*}{Corollary \thesubsection}
\theoremstyle{definition}
\newtheorem{definition}[theorem]{Definition}
\newtheorem*{definition*}{Definition \thesubsection}
\newtheorem*{example*}{Example \thesubsection}
\theoremstyle{remark}
\newtheorem{remark}[theorem]{Remark}
\newtheorem*{remark*}{Remark \thesubsection}
\def\sideremark#1{\ifvmode\leavevmode\fi\vadjust{\vbox to0pt{\vss
 \hbox to 0pt{\hskip\hsize\hskip1em
 \vbox{\hsize3cm\tiny\raggedright\pretolerance10000
  \noindent #1\hfill}\hss}\vbox to8pt{\vfil}\vss}}}%
\begin{document}

\title{BGG Sequences ---\\ A Riemannian perspective}

\author{Andreas \v Cap}

\address{Faculty of Mathematics\\
  University of Vienna\\
  Oskar--Morgenstern--Platz 1\\
  1090 Wien\\
  Austria}

\date{December 9, 2025}

\email{Andreas.Cap@univie.ac.at}

\subjclass{primary: 58J10; secondary: 35N10,53B20, 53C07, 53C21, 58A12}

\keywords{Bernstein-Gelfand-Gelfand sequences, BGG sequences, BGG
  machinery, geometric prolongation, invariant differential operators}

\begin{abstract}
  BGG resolutions and generalized BGG resolutions from representation theory of
  semisimple Lie algebras have been generalized to sequences of invariant
  differential operators on manifolds endowed with a geometric structure belonging to
  the family of parabolic geometries. Two of these structures, conformal structures
  and projective structures, occur as weakenings of a Riemannian metric respectively
  of a specified torsion-free connection on the tangent bundle. In particular, one
  obtains BGG sequences on open subsets of $\Bbb R^n$ as very special cases of the
  construction. It turned out that several examples of the latter sequences are of
  interest in applied mathematics, since they can be used to construct numerical
  methods to study operators relevant for elasticity theory, numerical relativity and
  related fields.

  This article is intended to provide an intermediate level between BGG sequences for
  parabolic geometries and the case of domains in $\Bbb R^n$. We provide a
  construction of conformal BGG sequences on Riemannian manifolds and of projective
  BGG sequences on manifolds endowed with a volume preserving linear connection on
  their tangent bundle. These constructions do not need any input from parabolic
  geometries. Except from standard differential geometry methods the only deeper
  input comes from representation theory. So one can either view the results as a
  simplified version of the constructions for parabolic geometries in an explicit
  form. Alternatively, one can view them as providing an extension of the simplified
  constructions for domains in $\Bbb R^n$ to general Riemannian manifolds or to
  manifolds endowed with an appropriate connection on the tangent bundle.
  
\end{abstract}

  \maketitle

\pagestyle{myheadings}\markboth{A.\ \v Cap}{Riemannian BGG construction}

\section{Introduction}\label{1}

The origin of BGG sequences lies in pure algebra. For an irreducible representation
$\Bbb V$ of a complex semisimple Lie algebra $\frak g$, I.N.\ Bernstein,
I.M.\ Gelfand and S.I.\ Gelfand constructed in \cite{BGG} a resolution of $\Bbb V$ by
homomorphisms of Verma modules. In \cite{Lepowsky}, this was generalized by
J.\ Lepowsky to a resolution by homomorphisms of generalized Verma modules associated
to a parabolic subalgebra $\frak p\subset\frak g$. These results have a connection to
geometry via a duality relating homomorphisms between certain induced modules to
invariant differential operators acting on sections of homogeneous vector bundles
over a homogeneous space. Via this duality, the homomorphisms showing up in the
resolutions constructed in \cite{BGG} correspond to invariant differential operators
between sections of homogeneous line bundles over the full flag manifold of a complex
Lie group $G$ with Lie algebra $\frak g$. Likewise, in the setting of \cite{Lepowsky}
there is a relation to invariant differential operators acting on sections of
homogeneous vector bundles over the generalized flag manifold $G/P$, that are induced
by irreducible representations of an appropriate parabolic subgroup $P\subset G$.

Via complexification, these results are also related to invariant differential
operators on homogeneous vector bundles over real generalized flag manifolds
$G/P$. For appropriate choices of $G$ and $P$, these generalized flag manifolds are
the homogeneous model for geometric structures, which are of broad interest in
differential geometry. In particular, for the connected component $G:=SO_0(n+1,1)$ of
the identity in the orthogonal group of a Lorentzian inner product in dimension
$n+2$, there is just one parabolic subgroup $P$ up to conjugation, namely the
stabilizer of an isotropic line in $\Bbb R^{n+1,1}$. It then turns out that
$G/P\cong S^n$ with the action identifying $G$ with the group of all orientation
preserving conformal isometries of $S^n$. Thus there is a relation of Lepowsky's
generalized BGG resolutions to conformally invariant differential operators, which was
exploited for example in the work \cite{Eastwood-Rice} of M.G.\ Eastwood and J.W.\
Rice. In the 1970's and 80's it became also clear, that other generalized flag
manifolds are related to interesting geometric structures. In particular, the unitary
group $SU(n+1,1)$ in a similar way leads to the homogeneous model of strictly
pseudo-convex CR structures of hypersurface type, which is the starting point for
several parallel developments in CR geometry and conformal geometry, that turned out
to be very fruitful.

Motivated by this, a general study of geometries with homogeneous model a real or
complex generalized flag manifold was initiated in the 1990's under the name
``parabolic geometries'', see \cite{book} for an introduction. One of the early
successes of this theory was a general version of the BGG construction in the setting
of differential operators in \cite{CSS-BGG}. This construction does not only work for
the homogeneous model (and provide resolutions of locally constant sheaves there) but
for arbitrary curved geometries, where it leads to sequences of differential
operators that are intrinsic to the geometric structure in question. Proving
existence of such operators in the curved setting is a major problem that is solved
by this construction in many cases. The basis for this construction is an equivalent
description of the geometric structures in the family as Cartan geometries. Tools
derived from this description are an essential ingredient, which makes the
construction not easily accessible. While these tools are not needed in what follows,
we have to discuss them briefly to explain the spirit and applications of BGG
sequences and motivate the further developments.

The Cartan description gives rise to a special class of natural vector bundles called
\textit{tractor bundles}. These are rather exotic objects since the action of
morphisms on them depends on higher order jets in a point, but each such bundle comes
with a linear connection canonically associated to the geometry. These so-called
\textit{tractor connections} can be coupled to the exterior derivative to obtain the
twisted de Rham sequence of differential forms with values in the given tractor
bundle. By construction, this is a sequence of differential operators of order one
defined on sections of natural bundles which are intrinsically associated to the
geometry. On the other hand, Lie algebra theory gives rise to algebraic structures
on the bundles of tractor-bundle-valued forms, basically a filtration by smooth
subbundles and a tensorial operation mapping $k$-forms to $(k-1)$-forms. These can
then be used to define natural subquotient bundles that are associated to Lie algebra
homology spaces and are more standard geometric objects. Moreover, the twisted de
Rham sequence can be ``compressed'' to a sequence of higher order operators acting on
sections of these subquotient bundles. By construction, these so-called
\textit{BGG-operators} also are naturally associated to the geometry.

On geometries locally isomorphic to the homogeneous model $G/P$, all tractor
connections are flat. Hence each of the twisted de Rham sequences is a complex and a
fine resolution of the sheaf of local parallel sections of the tractor bundle in
question. Analyzing the construction carefully, one concludes that also the induced
BGG sequence is a complex and computes the same cohomology and thus also is a fine
resolution. For general geometries, the curvature of any tractor connection
equivalently encodes the Cartan curvature, so one does not obtain complexes. But
there still is a close relation between the two sequences which allows one to switch
between a picture of simple operators on complicated bundles and one of complicated
operators on simple bundles. This is a crucial input for many applications of BGG
sequences on non-flat geometries. From the perspective of the current paper, one
should in particular mention applications to the study of conformally compact metrics
and in particular Poincar\'e-Einstein metrics, see e.g.\ \cites{GW,GLW,Mass} and the
analogs of these concepts in projective differential geometry, for example
\cites{Proj-Comp,Proj-Comp2}.

Another line of applications of BGG sequences on curved geometries concerns the first
operators in BGG sequences, which turn out to always define an overdetermined system
of PDE. These contain many important examples like Killing and conformal Killing
operators on all types of tensor bundles. Here one still gets a close relation to
parallel sections of the tractor bundle in question. Modifying the tractor
connections in a way that preserves the relation between the sequences, one can
interpret the construction as \textit{geometric prolongation} which allows to
equivalently rewrite the overdetermined system defined by the first BGG operator in
first order closed form and hence describe its solutions as parallel sections for a
connection. There are several nice applications of this idea, see e.g.\ \cites{BDE,
  EM}, and it was developed systematically in \cite{BCEG}, which also contains
several steps in the direction of the current article.

The second important input for this article came rather unexpected. One example of a
BGG sequence coming from projective differential geometry is a version of the
Riemannian deformation sequence, which (for the flat metric on a domain in
$\Bbb R^3$) is known as the \textit{Calabi complex} or the \textit{fundamental
  complex of linear elasticity}, see \cite{MikE-elasticity}. Studying analytical
properties of this complex with the ultimate goal to develop efficient numerical
methods for its study is of considerable interest in the applied mathematics
community. Finite element methods for differential forms were developed between the
1970s and 1990s and in the first decade of the 2000's, a more complete, efficient
picture for the numerical analysis of (scalar) differential forms was developed
under the name \textit{finite element exterior calculus}, see e.g.\ \cite{AFW1}. This
raised some interest in the geometric BGG construction (that is based on differential
forms), which in turn led to interaction between the communities and progress, see
\cites{AFW2,AFW3}, in that period. Unfortunately, there was not much interaction between
the two communities in the subsequent years and while there was quite a bit of
further activity on BGG-like constructions in the applied community, this was usually
based on ad-hoc constructions and did no use representation theory.

The interaction between the two communities was taken up again around 2020 in
discussions of D.\ Arnold, K.\ Hu, and myself. After a longer period of developing a
common language and exchanging ideas and points of view, this led to the joint
article \cite{weak} of K.\ Hu and myself. In this article, we develop a simplified
version of the BGG construction in the setting of the flat metric and flat connection
on Lipschitz domains in $\Bbb R^n$, which also applies in low (Sobolev)
regularity. This is done in an abstract setting of Hilbert complexes with the BGG
sequences coming from representation theory as a major example. In a followup article
\cite{Poincare}, we have shown that this approach can be used to carry over
constructions of Poincar\'e operators with good analytical properties for scalar
differential forms to the setting of BGG complexes (again in low regularity).

\medskip

This sets the stage for the current paper. I'll present a version of the BGG
construction on Riemannian manifolds which avoids all the technical input on
parabolic geometries and tractors. This can either be viewed as a simplified version
of the geometric construction based on a choice of metric in a conformal class or a
(volume-preserving) connection in a projective class. From that point of view, one
uses the realization of tractor bundles and tractor connections in terms of such a
choice as well as simplifications of the construction caused by the fact that one
does not aim for conformal invariance. Alternatively, it can be viewed as a
generalization of the approach of \cite{weak} to Riemannian manifolds in a smooth
setting. From that point of view, a substantial modification is needed in the
construction of the twisted complex, where curvature terms have to be included in
order to obtain complexes for non-flat metrics. In addition, the operations based on
the Levi-Civita connection do not lead to complexes any more, so alternative
descriptions of the cohomology are needed. On the other hand, the actual BGG
construction is quite close to the one in \cite{weak} (in the setting we consider).

There are some technical difficulties that cannot be avoided, however. On the one
hand, considerable input from representation theory is needed, in particular
Kostant's algebraic Hodge theory developed in \cite{Kostant}. It should be pointed
out that for parabolic geometries, parts of this machinery are automatically present in
the tractor connection, while in the approach of this article, they have to be built
in by hand. So it will not always be obvious why one proceeds in the chosen way. In
order to explicitly describe the bundles showing up in a BGG sequence, one also needs
to geometrically interpret Kostant's description of Lie algebra (co)homology from
\cite{Kostant}. It is possible, however, to simply accept the results in that
direction, a detailed understanding of proofs is not needed (and probably would also
not be very helpful). On the other hand, in order to treat different BGG sequences in
a uniform fashion, a slightly unusual approach to Riemannian geometry is helpful. We
will work quite a lot with bundles induced by representations and natural bundle maps
coming from equivariant maps between these representations. However, in each case of
interest, these can be brought to an explicit form, and we will discuss this in
examples.

\section{Conformal BGG sequences on Riemannian manifolds}\label{2}

\subsection{Background from Riemannian geometry}\label{2.1}
We first recall the relation between representation theory of the orthogonal group
$O(n)$ and natural vector bundles on Riemannian $n$-manifolds. A Riemannian metric
$g$ on an $n$-manifold $M$ can be equivalently described via the orthonormal frame
bundle $p:\Cal OM\to M$, which is a principal fiber bundle with structure group
$O(n)$. Given a representation $\Bbb W$ of $O(n)$ one can form the associated vector
bundle $\Cal OM\x_{O(n)}\Bbb W$, which we will denote by $\Cal WM\to M$. This gives a
functorial relation between representations of $O(n)$ and vector bundles canonically
associated to Riemannian $n$-manifolds, so $O(n)$-equivariant maps between
representations induce vector bundle maps between the corresponding bundles. In
particular, the standard representation of $O(n)$ on $\Bbb R^n$ corresponds to the
tangent bundle $TM$, while the adjoint representation on $\frak o(n)$ corresponds to
the bundle $\frak o(TM)$ of skew symmetric endomorphisms of $TM$. The correspondence
is compatible with all tensorial constructions.

The Levi-Civita connection of $g$ can be equivalently described as a principal
connection on $p:\Cal OM\to M$ which in turn induces a linear connection on each of
the associated bundles $\Cal WM$. We will denote all these connections by $\nabla$
and observe that they are compatible with all tensorial operations, which justifies
the uniform notation. From above, we know that the bundle maps induced by $\frak
o(n)$-equivariant maps between representations are parallel for the Levi-Civita
connection. Let us elaborate on this in a special case that will be important in what
follows. For a representation $\Bbb W$ of $O(n)$ we get the infinitesimal action of
$\frak o(n)$ on $\Bbb W$, which defines a bilinear map $\frak o(n)\x\Bbb W\to\Bbb
W$. Passing to associated bundles, we obtain a bilinear bundle map $\frak o(TM)\x\Cal
WM\to\Cal WM$. We will denote both this map and the induced tensorial operation on
sections by $\bullet$, so for $\Ph\in\Ga(\frak o(TM))$ and $\si\in\Ga(\Cal WM)$ we
get $\Ph\bullet\si\in\Ga(\Cal WM)$.

Denoting all these operations by the symbol $\bullet$ is justified by the fact that
they are compatible with constructions for natural vector bundles in a simple way.
This comes from the compatibility of the infinitesimal representation with
constructions. For example, for $\Bbb W=\Bbb W_1\otimes\Bbb W_2$, we get
$\Cal WM=\Cal W_1M\otimes\Cal W_2M$ and by construction, we get
$\Phi\bullet(\si_1\otimes\si_2)=(\Phi\bullet\si_1)\otimes\si_2+\si_1\otimes(\Phi\bullet\si_2)$,
and so on. The fact that $\bullet$ is parallel for the Levi-Civita connection is
equivalent to the fact that for each vector field $\xi\in\frak X(M)$ and any
$\si\in\Ga(\Cal WM)$, we obtain
\begin{equation}\label{Leibniz}
  \nabla_\xi(\Phi\bullet\si)=(\nabla_\xi\Ph)\bullet\si+\Ph\bullet(\nabla_\xi\si).
\end{equation}

This can be made more explicit for concrete choices. For example, if $\Bbb W=\Bbb R^n$
and $\Cal WM=TM$, then $\Ph\bullet\eta=\Ph(\eta)$ and \eqref{Leibniz} just boils down
to the definition of $\nabla$ on $\frak o(TM)$. If $\Bbb W=\Bbb R^{n*}$, the dual of
the standard representation, then $\Cal WM=T^*M$ and by definition of the dual
representation, we obtain $\Phi\bullet \al=-\al\o\Ph$ for $\al\in\Om^1(M)$, i.e.\
$(\Phi\bullet\al)(\eta)=-\al(\Ph(\eta))$. From this formula, one can easily verify
\eqref{Leibniz} directly. For higher degree forms and, more generally, for
$\binom0k$-tensor fields, one gets
$(\Phi\bullet
t)(\eta_1,\dots,\eta_k)=-\sum_it(\eta_1,\dots,\Phi(\eta_i),\dots,\eta_k)$ and again
\eqref{Leibniz} can be easily verified directly from this formula.

An example for the usefulness of the notation is the description of the curvature of
the Levi-Civita connection on all natural bundles. The Riemann curvature tensor is the
$\binom13$-tensor field $R$ defined by
\begin{equation}\label{R-def}
R(\xi,\eta)(\ze)=\nabla_\xi\nabla_\eta\ze-\nabla_\eta\nabla_\xi\ze-\nabla_{[\xi,\eta]}\ze
\end{equation}
for $\xi,\eta,\ze\in\frak X(M)$. The well known symmetries of $R$ imply that it can
be viewed as a two-form with values in $\frak o(TM)$, i.e.\ as a section of
$\La^2T^*M\otimes\frak o(TM)$. It is well known that for the induced connection on
any natural vector bundle the curvature is described via the natural action of this
two-form. In our notation, this means that for any natural bundle $\Cal WM$, any
$\si\in\Ga(\Cal WM)$ and $\xi,\eta\in\frak X(M)$, we get
\begin{equation}\label{R-on-W}
\nabla_\xi\nabla_\eta\si-\nabla_\eta\nabla_\xi\si-\nabla_{[\xi,\eta]}\si=
R(\xi,\eta)\bullet\si. 
\end{equation}

\subsection{Algebraic setup for conformal BGG's}\label{2.2}
We consider $\Bbb R^{n+2}$, denote the standard basis by $e_0,\dots,e_{n+1}$ and
consider the bilinear form $b$ on $\Bbb R^{n+2}$ defined by
$$
b(x,y):= x_0y_{n+1}+x_{n+1}y_0+\textstyle\sum_{i=1}^nx_iy_i. 
$$
Clearly, this has signature $(1,1)$ on the plane spanned by $e_0$ and $e_{n+1}$ and
equals the standard inner product on the orthogonal subspace spanned by
$e_1,\dots,e_n$, so $b$ is Lorentzian. We denote by $G:=O(b)$ the orthogonal group of
$b$, i.e.\ the group consisting of all $C\in GL(n+2,\Bbb R)$ such that
$b(Cx,Cy)=b(x,y)$ for all $x,y\in\Bbb R^{n+2}$. There is an obvious subgroup in
$O(b)$ consisting of those $C$ for which $Ce_0=e_0$ and $Ce_{n+1}=e_{n+1}$. Any
matrix with this property also maps the subspace spanned by $e_1,\dots,e_n$ to itself
and is orthogonal there, so we can view this subgroup as $O(n)\subset G$.

The Lie algebra $\frak g:=\frak o(b)$ of $G$ consists of all matrices $B$ such that
$0=b(Bx,y)+b(x,By)$ for any $x,y\in\Bbb R^{n+2}$. It is easy to describe this
explicitly, c.f.\ Section 1.6.3 of \cite{book}, as block matrices with blocks of sizes
$1$, $n$, and $1$ of the form
\begin{equation}
  \label{o(b)}
  \begin{pmatrix} a & Z & 0 \\ X & A & -Z^t \\ 0 & -X^t & -a\end{pmatrix} \text{\
    with\ } a\in\Bbb R, X\in\Bbb R^n, Z\in\Bbb R^{n*} \text{\ and\ } A\in\frak o(n).  
\end{equation}
Of course, the subgroup $O(n)$ corresponds to the subalgebra formed by all matrices
with $a=X=Z=0$. The block form can be viewed as defining a vector space decomposition
$\frak g=\frak g_{-1}\oplus\frak g_0\oplus\frak g_1$, with the components spanned by
$X$, $(a,A)$ and $Z$, respectively, so $\frak o(n)\subset\frak g_0$. The Lie bracket
on $\frak g$ is given by the commutator of matrices, which readily implies that this
decomposition is compatible with the Lie bracket in the sense that
$[\frak g_i,\frak g_j]\subset\frak g_{i+j}$ for all $i$, $j$. Here and in what
follows, we agree that $\frak g_\ell=\{0\}$ if $\ell\notin \{-1,0,1\}$. Such a
decomposition is referred to as a \textit{$|1|$--grading} on $\frak g$.

In particular, the Lie bracket on $\frak g$ restricts to a bilinear map
$\frak g_0\x\frak g_{-1}\to\frak g_{-1}$ which extends the standard representation of
$\frak o(n)\subset\frak g_0$ on $\frak g_{-1}=\Bbb R^n$. The remaining elements of
$\frak g_0$ (corresponding to $(a,0)$ with $a\in\Bbb R$) act on $\frak g_{-1}$ as
$-a\id$, which shows that we get an identification of $\frak g_0$ with the conformal
Lie algebra $\frak{co}(n)$ in this way. In the same way, the restriction
$\frak g_0\x\frak g_1\to\frak g_1$ identifies $\frak g_1$ with the dual of the
standard representation of $\frak{co}(n)$.

There is a simple way to realize the $|1|$-grading. Namely, one considers the
element$E\in\frak g_0$ that corresponds to $a=1$ and $A=0$. The adjoint action of $E$
is given by multiplication by $i$ on $\frak g_i$ for any $i=-1,0,1$. The element $E$
also acts diagonalizable on the standard representation $\Bbb R^{n+2}$ of
$\frak o(b)$ and the eigenspace decomposition there corresponds to the block form of
matrices used in \eqref{o(b)} and the eigenvalues are (from top to bottom) $+1$, $0$,
and $-1$. This extends to arbitrary representations and since $\frak g$ is
semisimple, we consider the infinitesimal representation of $\frak o(b)$ on an
irreducible representation $\Bbb V$ of $O(b)$. It then turns out that the eigenvalues
of $E$ form an unbroken string of the form $\la,\la+1,\dots,\la+N$ for some
$\la\in\Bbb Z$ and $N\in\Bbb N$. (This easily follows from the fact that all such
representations can be obtained from the standard representation via tensorial
constructions.)  Viewing this as defining a decomposition
$\Bbb V=\oplus_{i=0}^N\Bbb V_i$ we have the fundamental property that for the
infinitesimal action, we get $\frak g_i\cdot\Bbb V_j\subset\Bbb V_{i+j}$ (with similar
conventions as before).

\subsection{Passing to geometry}\label{2.3}
Fix an irreducible representation $\Bbb V$ of $O(b)$ with the decomposition
$\Bbb V=\oplus_{j=0}^N\Bbb V_j$ as in \S \ref{2.2}. Then we can restrict to the
subgroup $O(n)\subset O(b)$ and one easily concludes that each of the subspace
$\Bbb V_j$ is invariant under the action of $O(n)$. Hence if we form associated
bundles over a Riemannian $n$-manifold $(M,g)$ as in \S \ref{2.1} we get a
decomposition $\Cal VM=\oplus_{j=0}^N\Cal V_jM$. In particular, the components
$\frak g_i$ of $\frak g$ also give rise to associated bundles, which are $TM$ for
$i=-1$, $\frak{co}(TM)$ for $i=0$ and $T^*M$ for $i=1$.  Restricting the
infinitesimal representation of $\frak g$ on $\Bbb V$ (which is $G$-equivariant and
hence $O(n)$-equivariant) to the individual components $\frak g_i$, we get induced
bundles maps as follows:
\begin{itemize}
\item An extension $\bullet:\frak{co}(TM)\x\Cal VM\to\Cal VM$ of the map defined in
  \S \ref{2.1} such that $\frak{co}(TM)\bullet\Cal V_jM\subset\Cal V_jM$ for any
  $j$.
  \item A bilinear bundle map $\bullet:TM\x \Cal VM\to\Cal VM$ such that
    $TM\bullet\Cal V_jM\subset\Cal V_{j-1}M$.
  \item A bilinear bundle map $\bullet:T^*M\x \Cal VM\to\Cal VM$ such that
    $T^*M\bullet\Cal V_jM\subset\Cal V_{j+1}M$. 
  \end{itemize}
  As before, we will also denote by $\bullet$ the induced tensorial operations on
  sections. For a section $s$ of $\Cal VM$, we will denote the component in
  $\Cal V_jM$ by $s_j$ and we will swap between the points of view that $\bullet$ is
  defined on all of $\Cal VM$ or on the individual components. So for a vector field
  $\eta\in\frak X(M)$ and $s\in\Ga(\Cal VM)$ we get $\eta\bullet s\in\Ga(\Cal VM)$
  and $(\eta\bullet s)_j=\eta\bullet s_{j+1}\in\Ga(\Cal V_jM)$. Likewise for
  $\al\in\Om^1(M)$, we get $(\al\bullet s)_j=\al\bullet s_{j-1}\in\Ga(\Cal V_jM)$. As
  we have noted in \S \ref{2.1}, the fact that these bundle maps come from equivariant
  maps between the inducing representation implies that for $\xi\in\frak X(M)$, we get
  \begin{equation}\label{Leibniz2}
    \nabla_\xi(\eta\bullet s)=(\nabla_\xi\eta)\bullet s+\eta\bullet(\nabla_\xi s)
  \end{equation}
  and similarly for $\al\bullet s$ and the components $\eta\bullet s_j$ and
  $\al\bullet s_j$.

  The fact that the infinitesimal representation is compatible with the Lie bracket
  has several important consequences of us. On the one hand, $\frak g_{-1}$ and
  $\frak g_1$ are abelian subalgebras of $\frak g$, which implies that
  \begin{gather}\label{abelian-1}
    \eta_1\bullet(\eta_2\bullet s)=\eta_2\bullet(\eta_1\bullet s)\\
    \label{abelian1} \al_1\bullet(\al_2\bullet s)=\al_2\bullet(\al_1\bullet s). 
  \end{gather}
  On the other hand, the identification of $\frak g_0$ with $\frak{co}(\frak g_{-1})$
  is via the bracket. Thus we see that for $\Ph\in\frak{co}(TM)$ we obtain
  \begin{gather}\label{equiv-T}
    \Ph\bullet(\eta\bullet s)-\eta\bullet(\Ph\bullet s)=(\Ph(\eta))\bullet s\\
    \label{equiv-T*}
    \Ph\bullet(\al\bullet s)-\al\bullet(\Ph\bullet s)=-(\al\o\Ph)\bullet s
  \end{gather}

  \subsection{Examples}\label{2.4}
(1) Let us start with the standard representation $\Bbb V=\Bbb R^{n+2}$, which we know
  decomposes as $\Bbb V_0\oplus\Bbb V_1\oplus\Bbb V_2$, with the summands spanned by
  $e_{n+1}$, $\{e_1,\dots,e_n\}$ and $e_0$, respectively. The subalgebra $\frak
  o(n)$ acts trivially on the first and last summand and via the
  standard representation on $\Bbb V_1$. Hence sections of $\Cal V_0M$ and $\Cal V_2M$
  are just functions, while $\Ga(\Cal V_1M)=\frak X(M)$. Hence we can write
  $s\in\Ga(\Cal VM)$ as a triple $s=(f,\zeta,h)$ for $f,h\in C^\infty(M,\Bbb R)$ and
  $\zeta\in\frak X(M)$ and in the block form of \eqref{o(b)} the $0$-component $f$
  corresponds to the bottom component of the vector. Computing in the block form
  \eqref{o(b)} readily shows that
  \begin{equation}\label{act-on-V}
    \eta\bullet (f,\zeta,h)=(-g(\eta,\zeta),h\eta,0)\qquad \al\bullet
    (f,\zeta,h)=(0,-f\al^\#,\al(\zeta)).
  \end{equation}
  Here we use the usual musical isomorphisms, i.e.\ $\al^\#\in\frak X(M)$ is
  characterized by $g(\al^\#,\eta)=\al(\eta)$ and the inverse isomorphism is denoted
  by $\eta\mapsto\eta^\flat$. Notice that from these formulae, the
  identities \eqref{abelian-1} and \eqref{abelian1} as well as \eqref{equiv-T} and
  \eqref{equiv-T*} for $\Ph\in\Ga(\frak o(TM))$ are easily verified directly.

  One has to be slightly careful with the action of $\frak{co}(TM)$ on $\Bbb V$ as
  obtained in \S \ref{2.3}, though, which is \textit{not} the obvious one: The
  element $E\in\frak g_0$ from \S \ref{2.2} acts by multiplication by $-1$ on $\frak
  g_{-1}$ and hence corresponds to $-\id\in\frak{co}(TM)$. But it acts by
  multiplication by $-1$, $0$ and $1$ on $\Bbb V_0$, $\Bbb V_1$ and $\Bbb V_2$,
  respectively. This corresponds to the well known fact that in the conformal
  picture, the bundles $\Cal V_0M$ and $\Cal V_2M$ are bundles of densities and not
  of functions, while $\Cal V_1M$ is a weighted tangent or cotangent bundle. Using
  this observation, the general versions of \eqref{equiv-T} and \eqref{equiv-T*} can
  be verified directly.

  \smallskip

(2) One can now pass to more general representations and bundles via tensorial
  constructions, but there is a choice of identifications one has to make. For
  example, consider the irreducible representation $\Bbb W:=S^2_0\Bbb V$ with the
  subscript indicating trace-freeness with respect to the Lorentzian inner
  product. The decomposition of $\Bbb V$ easily implies that $\Bbb W=\Bbb
  W_0\oplus\dots\oplus\Bbb W_4$ with $\Bbb W_0=S^2\Bbb V_0$, $\Bbb W_1\cong
  \Bbb V_0\otimes\Bbb V_1$, $\Bbb W_2=(S^2\Bbb V_1\oplus\Bbb V_0\otimes\Bbb V_2)_0$, $\Bbb
  W_3=\Bbb V_1\otimes\Bbb V_2$ and $\Bbb W_4=S^2\Bbb V_2$. As representations of
  $O(n)$, these are just $\Bbb R$ in degree $0$ and $4$, $\Bbb R^n$ in degree $1$ and
  $3$ and $S^2_0\Bbb R^n\oplus\Bbb R$ in degree $2$. Hence a section of $\Cal WM$ can
  be viewed as $(f_1,\zeta_1,(\Ph,f_2),\zeta_2,f_3)$ with $f_i\in C^\infty(M,\Bbb
  R)$, $\ze_i\in\frak X(M)$ and $\Ph$ a trace-free symmetric $\binom20$-tensor
  field. For most components, it is also obvious how to relate them to elements of
  $S^2_0\Cal V$. Denoting symmetric tensor products by $\odot$, the element with only
  non-trivial component $f_1$ can be realized as $(f_1,0,0)\odot (1,0,0)$, and
  likewise the element with only non-trivial component $f_3$ corresponds to
  $(0,0,f_3)\odot (0,0,1)$. The elements with only non-trivial component $\ze_1$ or
  $\ze_2$ can be realized as $(0,\ze_1,0)\odot (1,0,0)$ and $(0,\ze_2,0)\odot
  (0,0,1)$, respectively.

  For the component with only non-trivial entry $\Ph$ things are still easy. We can
  write $\Ph=\sum_\ell \xi_\ell\odot\eta_\ell$ such that $\sum_\ell
  g(\xi_\ell,\eta_\ell)=0$, and this can be realized as $\sum_\ell
  (0,\xi_\ell,0)\odot (0,\eta_\ell,0)$. Finally, a natural element spanning the
  trivial subrepresentation in $\Bbb W_2$ is $-e_0\odot
  e_{n+1}+\tfrac{1}{n}\sum_{i=1}^ne_i\odot e_i$. Thus in terms of a local orthonormal
  frame $s_i$, we can realize the section with only non-zero component $f_2$ as
  $-(f_2,0,0)\odot (0,0,1)+\sum_{i=1}^n\frac{f_2}{n}(0,s_i,0)\odot
  (0,s_i,0)$. Having set up the identification, one can directly apply the usual
  rules for a Lie algebra action on a symmetric product to derive from
  \eqref{act-on-V} the following formulae for $\eta\bullet
  (f_1,\zeta_1,(\Ph,f_2),\zeta_2,f_3)$ and $\al\bullet
  (f_1,\zeta_1,(\Ph,f_2),\zeta_2,f_3)$:
  \begin{equation}\label{act-on-S2V}
    \begin{gathered}
      (-g(\eta,\ze_1),-g(\eta,\Ph)+\tfrac{2-n}nf_2\eta,((\eta\odot\zeta_2)_0,g(\eta,\ze_2)),2f_3\eta,0)\\
      (0,-2f_1\al^\#,(-(\al^\#\odot\ze_1)_0,-\al(\ze_1)),i_\al\Ph+\tfrac{n+2}nf_2\al^\#,\al(\ze_2)). 
    \end{gathered}
  \end{equation}
  Here $i_\al\Ph$ is the contraction between $\al$ and $\Ph$, so for
  $\Ph=\sum_\ell \xi_\ell\odot\eta_\ell$, this is given by
  $\sum_\ell(\al(\xi_\ell)\eta_\ell+\al(\eta_\ell)\xi_\ell)$. Similarly,
  $g(\eta,\Ph)=i_{\eta^\flat}\Ph$, where $\eta^\flat\in\Om^1(M)$ is
  $g(\eta,\_)$. Finally, in terms of a local orthonormal frame $s_i$, the
  trace-free part is
  $(\eta\odot\ze_2)_0=\eta\odot\zeta_2-\frac{1}{n}g(\eta,\ze_2)\sum_is_i\odot s_i$. Again
  the identities \eqref{abelian-1}--\eqref{equiv-T*} can then be verified directly,
  taking into account that the grading element $E$ acts on $\Bbb W_0$,\dots, $\Bbb
  W_4$ by multiplication by $-2$,\dots, $2$.

  \smallskip

  (3) We demonstrate an alternative approach for the example $\Bbb W:=\La^k\Bbb V^*$
  for $k=2,\dots n$.  This is close to the presentation of \cite{Gover-Silhan} (in a
  conformal setting). Here we immediately conclude that we get a decomposition of the
  form $\Bbb W=\Bbb W_0\oplus\Bbb W_1\oplus\Bbb W_2$ with $\Bbb W_0=\La^{k-1}\Bbb
  V_1^*\wedge\Bbb V_2^*$, $\Bbb W_1=\La^k\Bbb V_1^*\oplus (\La^{k-2}\Bbb
  V_1^*\wedge\Bbb V_0^*\wedge\Bbb V_2^*)$ and $\Bbb W_2=\Bbb V_0^*\wedge\La^{k-1}\Bbb
  V_1^*$. As representations of $O(n)$, the first and last are isomorphic to
  $\La^{k-1}\Bbb R^{n*}$, the middle one to $\La^k\Bbb R^{n*}\oplus\La^{k-2}\Bbb
  R^{n*}$. So sections of $\Cal WM$ can be written as triples
  $(\ph_1,(\ph_2,\ph_3),\ph_4)$ with $\ph_3\in\Om^{k-2}(M)$,
  $\ph_1,\ph_4\in\Om^{k-1}(M)$ and $\ph_2\in\Om^k(M)$. We fix the identification by
  requiring that this section maps $k$ sections $(f_i,\zeta_i,h_i)$ of $\Cal VM$ with
  $i=1,\dots k$ to
  \begin{align*}
  \textstyle\sum_i(-1)^{i-1}h_i&\ph_1(\ze_1,\dots,\widehat{\ze_i},\dots,\ze_k)+\ph_2(\ze_1,\dots,\ze_k)\\
  +&\textstyle\sum_{i<j}(-1)^{i+j}(f_ih_j-f_jh_i)\ph_3(\ze_1,\dots,
  \widehat{\ze_i},\dots,\widehat{\ze_j},\dots,\ze_k)\\
  +&\textstyle\sum_i(-1)^{i-1}f_i\ph_4(\ze_1,\dots,\widehat{\ze_i},\dots,\ze_k).
  \end{align*}
  This formula also shows directly how to extract the values of $\ph_1,\dots,\ph_4$
  on vector fields by plugging appropriate sections of $\Cal VM$ into
  $(\ph_1,(\ph_2,\ph_3),\ph_4)$. The usual formulae then show how to convert an
  action on $(\ph_1,(\ph_2,\ph_3),\ph_4)$ into an action on those sections, which
  then by direct computation gives
  \begin{equation}\label{act-on-LkV*}
    \begin{gathered}
      \eta\bullet(\ph_1,(\ph_2,\ph_3),\ph_4)=
      (-i_\eta\ph_2+\eta^\flat\wedge\ph_3, (\eta^\flat\wedge\ph_4,i_\eta\ph_4), 0)\\
     \al\bullet(\ph_1,(\ph_2,\ph_3),\ph_4)=(0,(-\al\wedge\ph_1,i_{\al^\#}\ph_1),i_{\al^\#}\ph_2+\al\wedge\ph_3).  
    \end{gathered}
  \end{equation}

  \medskip

  (4) The last example is $\Bbb W=\frak g$, the adjoint representation. This is
  isomorphic to $\La^2\Bbb V^*$, but we get some special operations here that we will
  need later on. Of course, the decomposition has the form $\Bbb W_0\oplus\Bbb
  W_1\oplus\Bbb W_2$ and is just the shifted version of the $|1|$-grading. Thus the
  most natural identification is $\Cal W_0M=TM$, $\Cal W_1M=\frak{co}(TM)$ and $\Cal
  W_2M=T^*M$, so sections can be viewed as triples $(\ze,\Ph,\ph)$ with $\ze\in\frak
  X(M)$, $\Ph\in\Ga(\frak{co}(TM))$ and $\ph\in\Om^1(M)$. Note that here $\bullet$ is
  induced by the Lie bracket of $\frak g$, which is also used to identify $\frak g_0$
  with $\frak{co}(\frak g_{-1})$. This readily implies that the $\Cal W_0$-component
  of $\eta\bullet (\ze,\Ph,\ph)$ equals $-\Ph(\eta)$ and the $\Cal W_2$-component of
  $\al\bullet (\ze,\Ph,\ph)$ equals $\al\circ\Ph$. So we only have to compute the
  $\Cal W_1M$ components of both operations, and they both come from the bracket
  $\frak g_{-1}\x\frak g_1\to\frak g_0$. We denote the corresponding operation $\frak
  X(M)\x \Om^1(M)\to\Ga(\frak{co}(TM))$ by $\{\ ,\ \}$ which means that
  \begin{equation}\label{act-adj}
    \eta\bullet (\ze,\Ph,\ph)=(-\Ph(\eta),\{\eta,\ph\},0) \qquad \al\bullet
    (\ze,\Ph,\ph)=(0,-\{\ze,\al\},\al\o\Ph).
  \end{equation}
  To compute the operation $\{\ ,\ \}$ explicitly, we just have to take
  $X\in\frak g_{-1}$ and $Z\in\frak g_1$ and compute the map
  $\frak g_{-1}\to\frak g_{-1}$ that sends $Y$ to $[[X,Z],Y]$. This easily implies
  that
  \begin{equation}\label{bracket}
    \{\eta,\ph\}(\xi)=\ph(\eta)\xi+\ph(\xi)\eta-g(\xi,\eta)\ph^\#.
  \end{equation}

  \subsection{The twisted connection}\label{2.5}
  To start the BGG construction, we need more ingredients related to the curvature of
  $g$. For a Riemannian manifold $(M,g)$ of dimension $\geq 3$, let $R$ be the
  Riemann curvature tensor as defined in \eqref{R-def} above. Recall that the
  \textit{Ricci curvature} $\Ric$ is the symmetric $\binom02$-tensor field obtained
  by contraction as
  \begin{equation}\label{Ricci}
    \Ric(\eta,\zeta)=\textstyle\sum_ig(R(\xi_i,\eta)(\ze),\xi_i)
\end{equation}
for a local orthonormal frame $\{\xi_i\}$. The trace $\Sc$ of $\Ric$ then is the
scalar curvature and we denote by $\Ric_0:=\Ric-\frac{1}n\Sc g$ its trace-free
part. (Vanishing of $\Ric_0$ is the definition of Einstein metrics used in Riemannian
geometry.) For some purposes, it is better to use a slight modification of $\Ric$
called the \textit{Schouten tensor} $\Rho$. This can be characterized via
$\Ric=(n-2)\Rho+\tr(\Rho)g$, which easily implies that $\Rho_0=\tfrac1{n-2}\Ric_0$
and $2(n-1)\tr(\Rho)=\Sc$, so $\Rho=\tfrac1{n-2}\Ric_0+\tfrac1{2n(n-1)}\Sc g$. Thus
$\Rho$ contains the same information as $\Ric$. The main advantage of $\Rho$ is that
it shows up in a simple formula for the decomposition of the Riemann curvature into
its tracefree part, called the \textit{Weyl curvature} $W$ and a trace part, see
Section 2.1 of \cite{BEG}. For our purpose, this can be neatly expressed using the
operation $\{\ ,\ \}$ introduced in \S \ref{2.4} (4) above as
\begin{equation}
  \label{Weyl}
  R(\xi,\eta)(\ze)=W(\xi,\eta)(\ze)+\{\xi,\Rho(\eta)\}(\ze)-\{\eta,\Rho(\xi)\}(\ze). 
\end{equation}
This follows directly from the formula in \cite{BEG} using \eqref{bracket}. It is also
proved in a slightly more general setting in Section 1.6.6 of \cite{book}, which
however uses the opposite sign convention for the Schouten tensor. One can actually
view \eqref{Weyl} as the definition of the Weyl curvature, one then has to show that
$W$ has the same symmetries as $R$ but in addition lies in the kernel of all
contractions. The advantage of the form \eqref{Weyl} of the decomposition is that it
immediately extends to the induced connection on any natural vector bundle, one just
has to replace the evaluation on $\ze$ by the action $\bullet$ on a section $s$.

We need a second curvature quantity, the \textit{Cotton--York tensor} $Y$ of
$g$. This is a two-form on $M$ with values in $T^*M$ which is defined by
\begin{equation}\label{Y-def}
  Y(\xi,\eta):=\nabla_\xi \Rho(\eta)-\nabla_\eta\Rho(\xi)-\Rho([\xi,\eta])
\end{equation}
for $\xi,\eta\in\frak X(M)$. Thus $Y$ is the covariant exterior derivative of the
Schouten tensor $\Rho$, see \S\ref{2.6} below. It is well known, c.f.\ \cite{BEG},
that for $n=3$, the Weyl-tensor $W$ always vanishes identically and vanishing of $Y$
is equivalent to conformal flatness of $g$. For $n\geq 4$, it is well known that $W$
vanishes identically if and only if $g$ is conformally flat and that $Y$ can be
obtained as the divergence of $W$, so it also vanishes in the conformally flat case.

\begin{definition}\label{def2.5}
  Consider an irreducible representation $\Bbb V$ of $O(b)$ as in \S \ref{2.2}. Then
  we define the \textit{twisted connection} $\nabla^{\Cal V}$ on $\Cal VM$ by
\begin{equation}
  \label{twisted-def}
  \nabla^{\Cal V}_\xi s:=\nabla_\xi s+\xi\bullet s-\Rho(\xi)\bullet s, 
\end{equation}
where $\nabla$ is the Levi-Civita connection. 
\end{definition}

Since the last two terms in \eqref{twisted-def} are tensorial, this indeed is a
linear connection on $\Cal VM$. Taking the natural decomposition $\Bbb
V=\oplus_{i=0}^N\Bbb V_i$ and the corresponding decomposition $\Cal VM=\oplus\Cal
V_iM$ we see that $\nabla_\xi$ preserves the summands $\Ga(\Cal V_iM)$ but
$\nabla^{\Cal V}_\xi$ does not have this property. Indeed, denoting the components by
subscripts, we get
$$
(\nabla^{\Cal V}_\xi s)_i=\nabla_\xi s_i+\xi\bullet s_{i+1}-\Rho(\xi)\bullet
s_{i-1}. 
$$
Observe that the summand containing the action of $\xi$ corresponds to the operators
called $S$ in \cite{weak}, while the term involving the Schouten tensor is not
present there. The main reason for including this term is the following result.

\begin{thm}\label{thm2.5}
  For $\xi,\eta\in\frak X(M)$ and $s\in\Ga(\Cal VM)$, the curvature $R^{\Cal V}$ of
  $\nabla^{\Cal V}$ is given by
  $$
  R^{\Cal V}(\xi,\eta)(s):=W(\xi,\eta)\bullet s+Y(\xi,\eta)\bullet s,
  $$
  where $W$ and $Y$ are the Weyl curvature and the Cotton--York tensor of $g$,
  respectively. In particular, the connection $\nabla^{\Cal V}$ is flat if and only
  if the metric $g$ is conformally flat.
\end{thm}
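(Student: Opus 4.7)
The plan is to expand $R^{\Cal V}(\xi,\eta)s$ directly from its definition by substituting \eqref{twisted-def} into $\nabla^{\Cal V}_\xi\nabla^{\Cal V}_\eta s-\nabla^{\Cal V}_\eta\nabla^{\Cal V}_\xi s-\nabla^{\Cal V}_{[\xi,\eta]}s$, and then to organize the resulting contributions into six families according to how many vector-field actions and how many Schouten actions they contain. First, the purely Levi-Civita part is $R(\xi,\eta)\bullet s$ by \eqref{R-on-W}, which I would immediately rewrite via the Weyl decomposition \eqref{Weyl} as $W(\xi,\eta)\bullet s+(\{\xi,\Rho(\eta)\}-\{\eta,\Rho(\xi)\})\bullet s$. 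Next, the family mixing $\nabla$ with a single vector-field action reduces, by the Leibniz identity \eqref{Leibniz2} and torsion-freeness of $\nabla$, to $[\xi,\eta]\bullet s$, which cancels against the vector-field piece of $\nabla^{\Cal V}_{[\xi,\eta]}s$. Analogously, the family mixing $\nabla$ with a single Schouten action, once combined with the remaining Schouten piece of $\nabla^{\Cal V}_{[\xi,\eta]}s$, collapses to $Y(\xi,\eta)\bullet s$ by \eqref{Y-def}. The two ``double-action'' families vanish outright: the double vector-field one by \eqref{abelian-1}, and the double Schouten one by \eqref{abelian1}.

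The crux is the mixed vector-field/Schouten family, consisting of the four terms $\xi\bullet(\Rho(\eta)\bullet s)$ and $\Rho(\xi)\bullet(\eta\bullet s)$ together with their $\xi\leftrightarrow\eta$ partners. The key tool here is the commutation identity
\[
\xi\bullet(\al\bullet s)-\al\bullet(\xi\bullet s)=\{\xi,\al\}\bullet s,
\]
valid in any representation because the bracket $[\frak g_{-1},\frak g_1]\subset\frak g_0$ is precisely encoded by the operation $\{\,,\,\}$ via \eqref{bracket}. Applying this identity converts the four terms into exactly the negative of $(\{\xi,\Rho(\eta)\}-\{\eta,\Rho(\xi)\})\bullet s$, which cancels the bracket terms extracted from the Weyl decomposition in the first family. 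What remains is the asserted expression $W(\xi,\eta)\bullet s+Y(\xi,\eta)\bullet s$.

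For the ``if and only if'' statement, conformal flatness of $g$ forces $W=0$ (automatic when $n=3$, classical when $n\geq 4$) and $Y=0$ (as the divergence of $W$ for $n\geq 4$, or by the defining condition when $n=3$), so $R^{\Cal V}=0$. Conversely, observe that $W(\xi,\eta)\in\frak{co}(TM)$ acts preserving each summand $\Cal V_jM$ while $Y(\xi,\eta)\in T^*M$ shifts the grading by $+1$; hence the two contributions to $R^{\Cal V}$ live in distinct graded pieces and must vanish separately on each homogeneous section, and faithfulness of the action on a nontrivial irreducible $\Bbb V$ then forces $W=0$ and $Y=0$ pointwise. I expect the main technical obstacle to be the sign bookkeeping in the mixed vector/Schouten family, where the commutation identity must be applied in precisely the right order for the resulting bracket terms to match and cancel those produced by the Weyl decomposition; the guiding principle is that the algebraic contributions from families~(i) and~(vi) are designed to annihilate each other, isolating exactly the ``curvature-of-$g$'' content $W$ and ``derivative-of-$\Rho$'' content $Y$.
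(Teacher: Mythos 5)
Your proposal is correct and follows essentially the same route as the paper: both expand the curvature from the definition, sort the terms by how many $\bullet$-actions of vector fields and of Schouten one-forms they contain (equivalently, by the degree shift in the grading $\Cal VM=\oplus\Cal V_iM$), kill the double-action families via \eqref{abelian-1} and \eqref{abelian1}, use \eqref{Leibniz2} with torsion-freeness and \eqref{Y-def} for the single-derivative families, and cancel the mixed vector-field/Schouten terms against the bracket terms of the Weyl decomposition \eqref{Weyl} via the commutator identity for $\{\ ,\ \}$. Your additional spelling-out of the ``if and only if'' clause (separating the $W$- and $Y$-contributions by their degree shift and invoking faithfulness of a nontrivial irreducible representation) is a sound supplement to what the paper leaves implicit.
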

\begin{proof}
  We directly use the defining equation
$$
R^{\Cal V}(\xi,\eta)(s)=\nabla^{\Cal V}_\xi\nabla^{\Cal V}_\eta s-\nabla^{\Cal
  V}_\eta\nabla^{\Cal V}_\xi s-\nabla^{\Cal V}_{[\xi,\eta]}s
$$
for the curvature. Taking $s\in\Ga(\Cal VM)$ and the components
$s_i\in\Ga(\Cal V_iM)$, we of course get
$R^{\Cal V}(\xi,\eta)(s)=\sum_i R^{\Cal V}(\xi,\eta)(s_i)$. From the definition in
\eqref{twisted-def} it follows readily that $R^{\Cal V}(\xi,\eta)(s_i)$ may have
non-trivial components of degree $i-2$, $i-1$, $i$, $i+1$, and $i+2$ only. By
definition, the component in degree $i-2$ equals
$\xi\bullet(\eta\bullet s_i)-\eta\bullet(\xi\bullet s_i)$, so this vanishes by
\eqref{abelian-1}. In the same way, \eqref{abelian1} implies vanishing of the
component of degree $i+2$. For the component in degree $i-1$ we immediately get
$$
\nabla_\xi (\eta\bullet s_i)+\xi\bullet(\nabla_\eta s_i)-\nabla_\eta (\xi\bullet
s_i)-\eta\bullet(\nabla_\xi s_i)-[\xi,\eta]\bullet s_i. 
$$
By \eqref{Leibniz2} the first and fourth term add up to $(\nabla_\xi\eta)\bullet s_i$
and likewise the second and third term give $-(\nabla_\eta\xi)\bullet s_i$. But
torsion freeness of $\nabla$ gives $[\xi,\eta]=\nabla_\xi\eta-\nabla_\eta\xi$ so the
whole expression vanishes. The analysis of the component in degree $i+1$ is very
similar, but we have to replace the vector fields acting via $\bullet$ by their image
under $\Rho$. Using the analog of \eqref{Leibniz2} for the action of one-forms and the
defining equation \eqref{Y-def}, one readily concludes that the component in degree
$i+1$ is given by $Y(\xi,\eta)\bullet s_i$.

So it remains to understand the component in degree $i$. Expanding the defining
equation, there are terms containing only Levi-Civita derivatives and in view of
\eqref{R-on-W}, they add up to $R(\xi,\eta)\bullet s_i$. The remaining terms only
come from the double derivatives (and not from the derivative in direction of the Lie
bracket) and they are given by
$$
-\Rho(\xi)\bullet(\eta\bullet s_i)-\xi\bullet(\Rho(\eta)\bullet
s_i)+\Rho(\eta)\bullet(\xi\bullet s_i)+\eta\bullet(\Rho(\xi)\bullet s_i).
$$
Since the bundle $\Cal VM$ is induced by a representation of $\frak g$, the first
term and the last term add up to $\{\eta,\Rho(\xi)\}\bullet s_i$, while the other two
terms add up to $-\{\xi,\Rho(\eta)\}\bullet s_i$. In view of the extension of
\eqref{Weyl} to associated bundles, this completes the proof.
\end{proof}

\subsection{The twisted de Rham sequence}\label{2.6}
The next step in the construction is standard. Any linear connection on a vector
bundle $F$ can be coupled to the exterior derivative to define the so-called
covariant exterior derivative on $F$-valued differential forms. Moreover, the
composition of two instances of this operator can be explicitly described in terms of
the curvature of the initial connection. In particular, starting from a flat
connection on $F$, one obtains a differential complex.

The simplest way to implement this is to view $\ph\in\Om^k(M,F)$, the space of
$F$-valued $k$-forms, as a $k$-linear, alternating map, which associated to $k$
vector fields on $M$ a section of $F$ and is linear over smooth functions in each
entry. Given a linear connection $\nabla$ on $F$, the covariant exterior derivative
$d^{\nabla}:\Om^k(M,F)\to\Om^{k+1}(M,F)$ is characterized by
\begin{equation}\label{dnab-def}
  \begin{aligned}
    (d^{\nabla}\ph)(\xi_0&,\dots,\xi_k)=\textstyle\sum_{i=0}^k(-1)^i\nabla_{\xi_i}
    \ph(\xi_0,\dots,\widehat{\xi_i},\dots,\xi_k)\\
    +&\textstyle\sum_{i<j}(-1)^{i+j}\ph([\xi_i,\xi_j],\xi_0,\dots,\widehat{\xi_i},
    \dots,\widehat{\xi_j},\dots\xi_k), 
  \end{aligned}
\end{equation}
where the hats denote omission. Exactly as for the global formula for the exterior
derivative, one easily verifies directly that this indeed defines an $F$-valued
$k+1$-form.

Applying this to the Levi-Civita connection on any natural bundle $\Cal WM$, we
obtain $d^{\nabla}:\Om^k(M,\Cal WM)\to\Om^{k+1}(M,\Cal WM)$. In the case of a bundle
$\Cal VM$ induced by a representation $\Bbb V$ of $O(b)$ as in \S \ref{2.3}, we can
also form $d^{\nabla^{\Cal V}}:\Om^k(M,\Cal VM)\to\Om^{k+1}(M,\Cal VM)$. It is easy
to describe the relation of this operator to $d^{\nabla}$. Via the decomposition
$\Cal VM=\oplus_{i=0}^N\Cal V_i$ we can decompose any form $\ph\in\Om^k(M,\Cal VM)$
into components $\ph_i\in\Om^k(M,\Cal V_iM)$. Using this, we formulate
\begin{prop}\label{prop2.6}
  For $\ph\in\Om^k(M,\Cal VM)$, we get
  
  (1) $d^{\nabla^{\Cal V}}\ph=d^{\nabla}\ph+\partial\ph+\partial^{\Rho}\ph$, where
  \begin{gather}\label{part}
    \partial\ph(\xi_0,\dots,\xi_k)=\textstyle\sum_{i=0}^k(-1)^i\xi_i\bullet\ph(\xi_0,\dots,\widehat{\xi_i},\dots,\xi_k)\\ \label{part-Rho}
    \partial^{\Rho}\ph(\xi_0,\dots,\xi_k)=\textstyle\sum_{i=0}^k(-1)^i\Rho(\xi_i)\bullet\ph(\xi_0,\dots,\widehat{\xi_i},\dots,\xi_k)
  \end{gather}
  Decomposing into components, we get
  $(d^{\nabla^{\Cal
      V}}\ph)_i=d^{\nabla}\ph_i+\partial\ph_{i+1}+\partial^{\Rho}\ph_{i-1}$.

  (2) $(d^{\nabla^{\Cal V}}(d^{\nabla^{\Cal V}}\ph))_i$ maps $\xi_0,\dots,\xi_{k+1}$
  to
  \begin{align*}
    \textstyle\sum_{j<\ell}(-1)^{j+\ell}(&W(\xi_j,\xi_\ell)\bullet
                                          \ph_i(\xi_0,\dots,\widehat{\xi_j},\dots,\widehat{\xi_\ell}\dots,\xi_k)\\
    +& Y(\xi_j,\xi_\ell)\bullet
       \ph_{i-1}(\xi_0,\dots,\widehat{\xi_j},\dots,\widehat{\xi_\ell}\dots,\xi_k)).
  \end{align*}
  In particular, $(\Om^*(M,\Cal VM),d^{\nabla^{\Cal V}})$ is a complex if and only if
  $g$ is conformally flat.  
\end{prop}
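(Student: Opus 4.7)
For part (1), the plan is to substitute $\nabla^{\Cal V}_\xi = \nabla_\xi + \xi\bullet - \Rho(\xi)\bullet$ directly into the defining formula \eqref{dnab-def} for the covariant exterior derivative and collect like terms. The Levi-Civita piece reproduces $d^\nabla\ph$ verbatim, Lie-bracket correction sum included. The two remaining contributions $\xi_i\bullet\ph(\dots)$ and $\Rho(\xi_i)\bullet\ph(\dots)$ are tensorial, i.e.\ $C^\infty(M)$-linear in $\xi_i$, so they do not interact with the bracket sum of \eqref{dnab-def} and therefore produce exactly the alternating expressions defining $\partial\ph$ and $\partial^\Rho\ph$ (up to the chosen sign conventions). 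The component statement is then immediate from the degree-shift properties recalled in \S\ref{2.3}: $d^\nabla$ preserves each $\Ga(\Cal V_iM)$, the operation $\bullet$ of $TM$ lowers $\Cal V$-degree by one, and that of $T^*M$ raises it by one, which accounts for the mixing pattern $i\mapsto i$, $i+1\mapsto i$, $i-1\mapsto i$.

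For part (2), I would invoke the general identity valid for any linear connection $\widetilde\nabla$ on any vector bundle $F\to M$ and any $\ph\in\Om^k(M,F)$,
\begin{equation*}
(d^{\widetilde\nabla}d^{\widetilde\nabla}\ph)(\xi_0,\dots,\xi_{k+1}) = \textstyle\sum_{j<\ell}(-1)^{j+\ell}R^{\widetilde\nabla}(\xi_j,\xi_\ell)\cdot\ph(\xi_0,\dots,\widehat{\xi_j},\dots,\widehat{\xi_\ell},\dots,\xi_{k+1}),
\end{equation*}
which is verified by direct expansion of \eqref{dnab-def}: the nested covariant derivatives together with the Lie bracket correction terms reorganize, via Jacobi and torsion-freeness of the usual bracket on vector fields, into the defining expression \eqref{R-def} for the curvature of $\widetilde\nabla$. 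Applying this to $\widetilde\nabla = \nabla^{\Cal V}$ and substituting the formula $R^{\Cal V}(\xi,\eta)\cdot s = W(\xi,\eta)\bullet s + Y(\xi,\eta)\bullet s$ from Theorem \ref{thm2.5} yields the displayed expression. Projection onto the $\Cal V_i$-summand then uses once more the grading-shift properties: $W(\xi,\eta)\in\frak o(TM)\subset\frak{co}(TM)$ preserves $\Cal V$-degree and hence acts on $\ph_i$, whereas $Y(\xi,\eta)\in T^*M$ raises $\Cal V$-degree by one and hence acts on $\ph_{i-1}$.

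The ``in particular'' assertion then follows from the facts recalled in \S\ref{2.5}: conformal flatness of $g$ is equivalent to the simultaneous vanishing of $W$ and $Y$ (with $W\equiv 0$ automatic in dimension three, and $Y$ obtainable as a divergence of $W$ for $n\geq 4$), and by the formula just derived this is in turn equivalent to $(d^{\nabla^{\Cal V}})^2=0$. The main obstacle here is not a single deep step but rather the careful bookkeeping with signs and with the $\Cal V$-decomposition; substantively, the proof is an immediate consequence of Theorem \ref{thm2.5} combined with the standard curvature identity for covariant exterior derivatives together with part (1).
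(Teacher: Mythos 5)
Your proposal is correct and follows essentially the same route as the paper: part (1) by direct substitution of \eqref{twisted-def} into \eqref{dnab-def}, and part (2) by invoking the standard identity expressing $(d^{\widetilde\nabla})^2$ through the curvature (which the paper cites from Section 19.13 of \cite{Michor:topics}) combined with Theorem \ref{thm2.5}. The grading bookkeeping and the appeal to the equivalence of conformal flatness with $W=Y=0$ match the paper's (very terse) argument.
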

\begin{proof}
  (1) immediately follows from combining the defining equations \eqref{twisted-def}
  and \eqref{dnab-def}. (2) follows by combining general results on the covariant
  exterior derivative (see Section 19.13 of \cite{Michor:topics}) with Theorem
  \ref{thm2.5}.
\end{proof}

The notation $\partial$ is chosen here since these operators are directly induced by
the standard differential in the complex computing the Lie algebra cohomology of the
abelian Lie algebra $\frak g_{-1}$ with coefficients in the representation $\Bbb
V$. This allows for some input from representation theory, which will be very helpful
later.

\begin{remark}\label{rem2.6}
  Directly generalizing parts of \cite{weak}, one can also define a connection on
  $\Cal VM$ via $(\xi,s)\mapsto\nabla_\xi s+\xi\bullet s$. The proof of Theorem
  \ref{thm2.5} then shows that the curvature of this connection is given by
  $R(\xi,\eta)\bullet s$, so loosely speaking the action term does not change the
  curvature. The associated covariant exterior derivative then is explicitly given by
  $d^\nabla\ph+\partial\ph$, so the maps $\partial$ correspond to the $S$-operators
  in \cite{weak}. Applying the covariant exterior derivative to $\ph$ twice, the
  result sends $\xi_0,\dots,\xi_{k+1}$ to
  $$
  \textstyle\sum_{j<\ell}(-1)^{j+\ell}R(\xi_j,\xi_\ell)\bullet
  \ph(\xi_0,\dots,\widehat{\xi_j},\dots,\widehat{\xi_\ell}\dots,\xi_k).
  $$
  So in the case that the metric $g$ is flat, this makes $\Om^*(M,\Cal WM)$ into a
  differential complex, thus providing a direct extension of parts of \cite{weak} to
  this case (in a smooth setting).

  For flat metrics, also each $(\Om^*(M,\Cal V_iM),d^{\nabla})$ is a complex and
  locally, the sum of the cohomologies of these complexes is isomorphic to the
  cohomology of this ``twisted complex''. Locally, one could also derive analogs of
  the $K$-operators used in \cite{weak} via the relation between local parallel
  sections. In contrast to the case of domains in $\Bbb R^n$ as studied in
  \cite{weak}, one cannot expect to derive uniform formulae for those operators,
  though.
\end{remark}

\subsection{The cohomology bundles}\label{2.7}
We have defined the operation $\partial$ on $\Cal VM$-valued differential forms, but
it is evidently tensorial and hence induced by bundle maps
$\La^kT^*M\otimes\Cal VM\to\La^{k+1}T^*M\otimes\Cal VM$ for $k=0,\dots,n-1$, which we
denote by the same symbol. By construction
$\partial(\La^kT^*M\otimes\Cal V_iM)\subset\La^{k+1}T^*M\otimes\Cal V_{i-1}M$ and
Theorem \ref{2.5} readily implies that $\partial\o\partial=0$. Alternatively the
latter fact can be easily verified directly using \eqref{abelian-1}. Hence in each
degree $k$, we have natural subbundles
$\im(\partial)\subset\ker(\partial)\subset\La^kT^*M\otimes\Cal VM$.

\begin{definition}\label{def2.7}
  Consider an irreducible representation $\Bbb V$ of $O(b)$ and the corresponding
  bundle $\Cal VM=\oplus_{i=0}^N\Cal V_iM$. Then for each degree $k=0,\dots,n$,
  consider $\im(\partial)\subset\ker(\partial)\subset\La^kT^*M\otimes\Cal VM$ and
  define the \textit{cohomology bundle} $\Cal H_k^{\Cal
    V}M:=\ker(\partial)/\im(\partial)$.
\end{definition}

The terminology here comes from the fact that $\partial$ is induced by the
differential in the standard complex computing the cohomology of the (abelian) Lie
algebra $\frak g_{-1}$ with coefficients in the representation $\Bbb V$. Hence the
bundle $\Cal H_k^{\Cal V}M$ by construction is the associated bundle corresponding to
the cohomology space $H^k(\frak g_{-1},\Bbb V)$ which naturally carries a
representation of $O(n)$ and of $\frak g_0\cong\frak{co}(n)$. For the following
developments it will not be really necessary to understand what the cohomology
bundles look like, but of course this is needed to deal with examples. It is
important to realize that the cohomology bundles are much smaller that the bundles of
$\Cal VM$-valued differential forms, and the difference gets more significant the
more complicated the representation $\Bbb V$ gets.

In simple cases, the explicit form can be determined by direct computations, but this
is a point where it becomes increasingly important to use information coming from
representation theory. Recall that any finite dimensional representation of $O(n)$ or
$\frak o(n)$ splits as a direct sum of irreducible representations, which do not
contain any non-trivial invariant subspaces. For $G_0\cong CO(n)$ and $\frak
g_0\cong\frak{co}(n)$ the same holds under an additional condition on the
representation which is satisfied in all cases arising in the context of this
article. A key feature of irreducible representations comes from Schur's lemma. An
equivariant map between two irreducible representations is either zero or an
isomorphism and in the complex case, this isomorphism is uniquely determined up to a
nonzero multiple. In particular, this easily implies that on a $\frak
g_0$-irreducible subspace in $\La^k(\frak g_{-1})^*\otimes\Bbb V$ the grading
element $E$ has to act by a scalar multiple of the identity, which implies that it is
contained in $\La^k(\frak g_{-1})^*\otimes\Bbb V_i$ for some index $i$.

Let us illustrate how to use elementary arguments from representation theory in the
example that $\Bbb V$ is the standard representation $\Bbb R^{n+2}$ of $O(b)$. We
know that $\Bbb V=\Bbb V_0\oplus\Bbb V_1\oplus\Bbb V_2$ with $\dim(\Bbb
V_0)=\dim(\Bbb V_2)=1$ and $\dim(\Bbb V_1)=n$. We have also seen above how $\partial$
is compatible with this decomposition. In particular, in degree zero, we get
$H^0(\frak g_{-1},\Bbb V)=\ker(\partial)$ and $\Bbb V_0$ is evidently contained in
there. Next, $\Bbb V_1\cong\Bbb R^n$ is mapped by $\partial$ to $\Bbb
R^{n*}\otimes\Bbb V_0$ and both the source and the target are irreducible. Verifying
directly that this map is non-zero (or using the general information on the structure
of cohomology provided below) we conclude that this component of $\partial$ has to be
an isomorphism. Finally, on $\Bbb V_2$, we obtain the special case $k=0$ of the
sequence 
\begin{equation} \label{sequence}
  \La^k\Bbb R^{n*}\otimes\Bbb V_2\overset{\partial}{\longrightarrow}
\La^{k+1}\Bbb R^{n*}\otimes\Bbb R^n\overset{\partial}{\longrightarrow} \La^{k+2}\Bbb
R^{n*}\otimes\Bbb V_0
\end{equation}
which we have to consider in general for $k=0,\dots n$ to deal with higher
degrees. Now the first and last space in this sequence are always irreducible, while
for most values of $k$, the middle space splits into three irreducible components. In
particular, for $k=0$, the middle space is isomorphic to $L(\Bbb R^n,\Bbb R^n)$ and
these components correspond to multiples of the identity, symmetric trace-free maps
and skew-symmetric maps, respectively. Now one immediately verifies that for $0\leq
k<n$, the first map in the sequence is injective, while the last map is always
surjective. Together with the above, this shows that, in degree $0$, $\partial$ is injective on $\Bbb
V_1\oplus\Bbb V_2$, so $H^0(\frak g_{-1},\Bbb V)=\Bbb V_0$. Next, $\Bbb
R^{n*}\otimes\Bbb V_0\subset\im(\partial)$ and $\partial$ is injective on $\Bbb
R^{n*}\otimes\Bbb V_2$. Hence $H^1(\frak g_{-1},\Bbb V)$ comes only from the middle
space in \eqref{sequence} for $k=0$, and is isomorphic to tracefree symmetric maps
(realized as symmetric maps modulo multiples of the identity). Similarly, one sees
that for $k=1,\dots,n-2$, the cohomology $H^{k+1}(\frak g_{-1},\Bbb V)$ comes from
the middle space in \eqref{sequence} and is isomorphic to the intersection of the
kernels of the complete alternation and the contraction $\La^{k+1}\Bbb
R^{n*}\otimes\Bbb R^n\to\La^k\Bbb R^{n*}$. It is well known that this also is an
irreducible representation of $\frak o(n)$.

\medskip

Detailed information on the cohomology spaces for general representations $\Bbb V$
can be obtained from more advanced representation theory, which requires substantial
background, however. This is based on Kostant's theorem, which was originally proved
in \cite{Kostant}, see also Section 3.3 of \cite{book} for an exposition. For any
complex irreducible representation $\Bbb V$ of $\frak o(b)$ and each degree $k$, the
cohomology $H^k(\frak g_{-1},\Bbb V)$ splits into a direct sum of irreducible
representations. The number of these components is independent of $\Bbb V$ and can be
described as the cardinality of the certain subset in the Weyl group of $\frak o(b)$
(which is a finite group). This subset can be determined algorithmically and knowing
this, the highest weights of the corresponding irreducible components in the
cohomology can be determined algorithmically from the highest weight of $\Bbb V$. The
case of a real representation $\Bbb V$ can then be dealt with via analyzing the
complexification.  In either case, this needs substantial input from representation
theory of semisimple Lie algebras (description of representations by highest weights,
etc.)  and thus is beyond the scope of the current article. In what follows, we will
not discuss in detail how to apply this theory but just state the results that we
need. In particular, the following fundamental facts can be easily deduced from just
knowing the subset of the Weyl group. The relevant subsets are discussed in Examples
4.3.7 and 4.3.8 of \cite{BEastwood}, which uses them for a different propose,
however, and thus does not discuss the relation to Lie algebra cohomology.
\begin{itemize}
\item If $n$ is odd or $n$ is even and $k\neq n/2$, then $H^k(\frak
  g_{-1},\Bbb V)$ is an irreducible representation of $\frak g_0$.
\item If $n$ is even, then $H^{n/2}(\frak g_{-1},\Bbb V)$ decomposes into the sum of
  at most two irreducible representations of $\frak g_0$.
\item $H^0(\frak g_{-1},\Bbb V)\cong\Bbb V_0$
\end{itemize}

Alternatively to the above description as a quotient, one can also realize $\Cal
H_k^{\Cal V}M$ as a subbundle of $\La^kT^*M\otimes\Cal VM$. Indeed, it is well known
that on any finite dimensional representation of $O(n)$, there is a positive definite
inner product that is $O(n)$-invariant. Applying this to each of the representations
$\Bbb V_i$ and $\La^k\Bbb R^{n*}$, we also get an inner product on $\Bbb
V=\oplus_{i=0}^N\Bbb V_i$ and then on $\La^k\Bbb R^{n*}\otimes\Bbb V$ for each
$k=0,\dots,n$. These inner products in turn induce natural positive definite bundle
metrics on each of the bundles $\La^kT^*M\otimes\Cal VM$. Having these at hand, we
can form the subbundle $\Upsilon_k:=\ker(\partial)\cap
\im(\partial)^\perp\subset\La^kT^*M\otimes\Cal VM$ which by construction projects
isomorphically onto $\Cal H_k^{\Cal V}M$. Since the grading element $E$ discussed in
\S \ref{2.2} acts by a scalar on each irreducible representation of $\frak g_0$, we
conclude that for $k\neq \frac{n}2$, the subbundle $\Upsilon_k$ is contained in
$\La^kT^*M\otimes\Cal V_iM$ for some index $i$.

\subsection{The Riemannian BGG construction}\label{2.8}
The BGG construction ``com\-pres\-ses'' the twisted exterior derivative to higher order
operators between the cohomology bundles. We first need appropriate ``inverses'' to
the bundle maps $\partial$. Following \cite{weak} we call these
$T:\La^kT^*M\otimes\Cal VM\to\La^{k-1}T^*M\otimes \Cal VM$ and use the same symbol
for the induced tensorial maps on sections. In terms of the inner products introduced
in the end of \S \ref{2.7} above, $\partial$ induces, in each degree, an isomorphism
$\ker(\partial)^\perp\to \im(\partial)$ and we define $T$ to be the inverse of this
on $\im(\partial)$ and as zero on $\im(\partial)^\perp$. This readily implies the
following properties.
\begin{alignat}{3}\label{T-spaces}
  &\ker(T)=\im(\partial)^\perp \qquad &&\im(T)=\ker(\partial)^\perp
  \qquad &&\Upsilon_k=\ker(\partial)\cap\ker(T) \\
  &\label{T-comp} T\o T=0  &&T\o \partial\o T=T &&\partial\o
  T\o\partial=\partial. 
\end{alignat}
Having these operators at hand, we can proceed similarly as in \cite{weak}. We first
observe that $T\o (d^{\nabla^\Cal V}-\partial)$ maps each $\Om^k(M,\Cal VM)$ to
itself, but the subspace $\Om^k(M,\Cal V_iM)$ is mapped to
$\Om^k(M,\oplus_{\ell\geq i+1}\Cal V_\ell M)$. Hence
$T\o (d^{\nabla^\Cal V}-\partial)$ is a nilpotent operator. On the other hand, we can
compute
$$
T\o d^{\nabla^{\Cal V}}\o T=T\o (d^{\nabla^\Cal V}-\partial)\o T+T\o\partial\o
T=(\id+T\o (d^{\nabla^\Cal V}-\partial))\o T. 
$$
Hence on sections of $\im(T)$, $T\o d^{\nabla^{\Cal V}}$ coincides with $(\id+T\o
(d^{\nabla^\Cal V}-\partial))$ and hence is invertible, with inverse given by
$\sum_{i=0}^\infty (-1)^i(T\o (d^{\nabla^\Cal V}-\partial))^i$ and the sum is
actually finite. Then we define $G:=(\sum_{i=0}^\infty (-1)^i(T\o (d^{\nabla^\Cal
  V}-\partial))^i)\o T$, which obviously implies $\ker(T)\subset\ker(G)$ and
$\im(G)\subset\im(T)$. 

\begin{prop}\label{prop2.8}
For $\al\in\Om^k(M,\Cal VM)$ with $T(\al)=0$, the form $S(\al):=\al-G(d^{\nabla^\Cal
  V}(\al))$ satisfies $T(S(\al))=0$, $S(\al)-\al\in\Ga(\im(T))$ and $T(d^{\nabla^\Cal
  V}(S(\al)))=0$ and is uniquely determined by theses three properties.
\end{prop}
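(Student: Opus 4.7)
The plan is to exploit a key identity implicit in the construction of $G$ in \S \ref{2.8}: writing $G = H \o T$, where $H$ is the formal Neumann inverse of $(\id + T\o(d^{\nabla^{\Cal V}} - \partial))$ on $\Ga(\im(T))$, the relation $T\o d^{\nabla^{\Cal V}}\o T = (\id + T\o(d^{\nabla^{\Cal V}}-\partial))\o T$ established there gives, upon composing with $H$ on the left, the global identity
\[
T \o d^{\nabla^{\Cal V}} \o G = T.
\]
Combined with the containment $\im(G) \subset \Ga(\im(T))$ (which, together with $T \o T = 0$, also yields $T\o G = 0$), these are the only inputs I will use.

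Verification of the three properties for $S(\al) = \al - G(d^{\nabla^{\Cal V}}\al)$ is then essentially formal. For the first property, $T(S(\al)) = T(\al) - T(G(d^{\nabla^{\Cal V}}\al)) = 0$ using $T(\al) = 0$ and $T\o G = 0$. For the second, $S(\al) - \al = -G(d^{\nabla^{\Cal V}}\al)$ lies in $\Ga(\im(T))$ directly from $\im(G) \subset \Ga(\im(T))$. For the third, the key identity yields
\[
T(d^{\nabla^{\Cal V}}(S(\al))) = T(d^{\nabla^{\Cal V}}\al) - T(d^{\nabla^{\Cal V}}(G(d^{\nabla^{\Cal V}}\al))) = T(d^{\nabla^{\Cal V}}\al) - T(d^{\nabla^{\Cal V}}\al) = 0.
\]

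For uniqueness, suppose $\al'$ is a second form satisfying all three properties. Set $\beta := S(\al) - \al'$; the second property (applied to both $S(\al)$ and $\al'$) shows $\beta \in \Ga(\im(T))$, while the third property gives $T(d^{\nabla^{\Cal V}}\beta) = 0$. Since $T \o d^{\nabla^{\Cal V}}$ is invertible on $\Ga(\im(T))$ — this is exactly the invertibility used to construct $G$ in \S \ref{2.8} — it is in particular injective there, forcing $\beta = 0$. There is essentially no genuine obstacle in this proof: the proposition is a formal bookkeeping consequence of the construction of $G$. The one point that requires mild attention is that property (2) is precisely what places the candidate difference $\beta$ in the domain $\Ga(\im(T))$ on which the invertibility of $T \o d^{\nabla^{\Cal V}}$ can be invoked.
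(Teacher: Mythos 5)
Your proof is correct and follows essentially the same route as the paper: the three properties come from $\im(G)\subset\im(T)\subset\ker(T)$ and the identity $T\o d^{\nabla^{\Cal V}}\o G=T$, and uniqueness from the invertibility of $T\o d^{\nabla^{\Cal V}}$ on $\Ga(\im(T))$. One small imprecision: left-composing $H$ with $T\o d^{\nabla^{\Cal V}}\o T=(\id+T\o(d^{\nabla^{\Cal V}}-\partial))\o T$ yields $H\o T\o d^{\nabla^{\Cal V}}\o T=T$ rather than the identity you want; the clean derivation is to note that $G(\ph)=H(T(\ph))$ lies in $\Ga(\im(T))$, where $T\o d^{\nabla^{\Cal V}}$ agrees with $\id+T\o(d^{\nabla^{\Cal V}}-\partial)$, whose inverse is $H$.
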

\begin{proof}
The first two properties of $S(\al)$ follow immediately from
$\im(G)\subset\im(T)\subset\ker(T)$. For the last property, we just observe that by
construction $T\o d^{\nabla^{\Cal V}}\o G=T$. If $\ph\in\Om^k(M,\Cal VM)$ also
satisfies the three properties, then $\ph-S(\al)\in\Ga(\im(T))$ and $(T\o
d^{\nabla^{\Cal V}})(\ph-S(\al))=0$. But as verified above, $T\o d^{\nabla^{\Cal V}}$
is invertible on $\Ga(\im(T))$, which implies $\ph=S(\al)$.  
\end{proof}

In particular, we can apply $S$ to $\al\in\Ga(\Upsilon_k)$ to obtain
$S(\al)\in\Ga(\ker(T))\subset\Om^k(M,\Cal VM)$. Since $S(\al)-\al\in\Ga(\im(T))$, we
conclude that the component of $S(\al)$ in $\Ga(\Upsilon_k)$ coincides with $\al$,
whence $S$ is called the \textit{splitting operator}. On the other hand, since
$d^{\nabla^{\Cal V}}(S(\al))\in\Ga(\ker(T))$, we can project it orthogonally to
$\Ga(\Upsilon_{k+1})$ to define $D(\al)$ and obtain a differential operator
$D=D_k:\Ga(\Upsilon_k)\to\Ga(\Upsilon_{k+1})$. These operators are called the
\textit{BGG operators} determined by $\Bbb V$.

\subsection{The conformally flat case}\label{2.9}
If the metric $g$ is conformally flat, then by Proposition \ref{prop2.6}, the twisted
de Rham sequence $(\Om^*(M,\Cal VM),d^{\nabla^{\Cal V}})$ is a complex. This leads
to a nice conceptual understanding of the relation to the BGG sequence.

\begin{thm}\label{thm2.9}
If the twisted de Rham sequence $(\Om^*(M,\Cal VM),d^{\nabla^{\Cal V}})$ is a
complex, then also the BGG sequence $(\Ga(\Upsilon_*),D)$ is a complex and for any
open subset $U\subset M$, the two complexes compute the same cohomology on $U$. In
particular, $(\Ga(\Upsilon_*),D)$ is a fine resolution of the sheaf of local parallel
sections of $\Cal VM$. 
\end{thm}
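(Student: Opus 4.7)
My plan is to work in three stages: first prove $D\circ D=0$ under the hypothesis; next establish local exactness of $(\Ga(\Upsilon_*),D)$ in positive degrees by a gauge-fixing trick that transports a primitive in the twisted de Rham complex into one for the BGG complex; and finally identify $\ker D_0$ with the sheaf of $\nabla^{\Cal V}$-parallel sections of $\Cal VM$, so that both complexes become fine resolutions of the same sheaf on $U$ and hence compute the same cohomology.

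The first stage rests on the identity $d^{\nabla^{\Cal V}}(S(\al))=S(D(\al))$ for $\al\in\Ga(\Upsilon_k)$. To prove it, set $\tau:=d^{\nabla^{\Cal V}}(S(\al))$. Proposition \ref{prop2.8} already gives $T(\tau)=0$, while $\tau-D(\al)\in\Ga(\im T)$ by the very definition of $D(\al)$ as the orthogonal projection of $\tau$ to $\Upsilon_{k+1}$. The hypothesis that $d^{\nabla^{\Cal V}}$ squares to zero further yields $T(d^{\nabla^{\Cal V}}(\tau))=0$. These are exactly the three properties characterizing $S(D(\al))$ in Proposition \ref{prop2.8}, so $\tau=S(D(\al))$. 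Applying $\Pi_\Upsilon\circ d^{\nabla^{\Cal V}}$ to both sides and using $(d^{\nabla^{\Cal V}})^2=0$ gives $D(D(\al))=0$. As a bonus, $S$ becomes a chain map $(\Ga(\Upsilon_*),D)\to(\Om^*(M,\Cal VM),d^{\nabla^{\Cal V}})$, and $\Pi_\Upsilon\circ S=\id$.

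For the second stage, flatness of $\nabla^{\Cal V}$ on $U$ (Theorem \ref{thm2.5}) makes the twisted de Rham complex a fine resolution of the sheaf of parallel sections on $U$. Given a BGG cocycle $\al\in\Ga(\Upsilon_k|_U)$ with $k\geq 1$, the first stage gives $d^{\nabla^{\Cal V}}(S(\al))=0$, so on a small enough $V\subset U$ there is $\psi\in\Om^{k-1}(V,\Cal VV)$ with $d^{\nabla^{\Cal V}}\psi=S(\al)$. I will gauge-fix $\psi$ into $\ker T$ by setting $\psi':=\psi-d^{\nabla^{\Cal V}}(G\psi)$. The crucial lemma is $T\circ d^{\nabla^{\Cal V}}\circ G=T$ on all of $\Om^k$, which follows by unwinding that $G=A\circ T$ with $A=(T\circ d^{\nabla^{\Cal V}})^{-1}$ on $\Ga(\im T)$; it yields $T(\psi')=0$, while $(d^{\nabla^{\Cal V}})^2=0$ gives $d^{\nabla^{\Cal V}}\psi'=S(\al)$. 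Since $S(\al)\in\Ga(\ker T)$, also $T(d^{\nabla^{\Cal V}}\psi')=0$, so the uniqueness in Proposition \ref{prop2.8} applied to $\tilde\al:=\Pi_\Upsilon(\psi')\in\Ga(\Upsilon_{k-1})$ forces $\psi'=S(\tilde\al)$. Then $D(\tilde\al)=\Pi_\Upsilon(d^{\nabla^{\Cal V}}\psi')=\Pi_\Upsilon(S(\al))=\al$, using $S(\al)-\al\in\Ga(\im T)$ and orthogonality of $\im T$ to $\Upsilon$. This gauge-fixing lemma is the main technical obstacle; everything else is formal once one carefully unfolds the geometric series defining $G$.

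For the third stage, Kostant's theorem gives $\Upsilon_0=\Cal V_0M$, and in degree zero the Hodge decomposition reduces to $\Cal VM=\Upsilon_0\oplus\im T$ since no image of $\partial$ lands there. A parallel section $s\in\Ga(\Cal VM)$ automatically satisfies $T(s)=0$, $s-s_0\in\Ga(\im T)$ and $T(d^{\nabla^{\Cal V}}s)=0$, so Proposition \ref{prop2.8} forces $s=S(s_0)$; thus $s\mapsto s_0$ is an injection from the sheaf of parallel sections into $\Ga(\Upsilon_0)$ whose image is $\ker D_0$, since any $\al\in\ker D_0$ produces the parallel section $S(\al)$ with $\Pi_\Upsilon(S(\al))=\al$. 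Combined with the second stage, this makes the augmented BGG complex a fine resolution of the same sheaf as the twisted de Rham complex on $U$, so both compute the same cohomology.
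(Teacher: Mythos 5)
Your proposal is correct, and its technical core coincides with the paper's proof: the identity $d^{\nabla^{\Cal V}}(S(\al))=S(D(\al))$ obtained from the uniqueness clause of Proposition \ref{prop2.8}, and the gauge-fixing $\psi\mapsto\psi-d^{\nabla^{\Cal V}}(G(\psi))$ together with $T\circ d^{\nabla^{\Cal V}}\circ G=T$, are exactly the two ingredients the paper uses. The difference lies only in how the cohomology comparison is assembled: the paper applies the gauge-fixing directly to closed forms and to primitives over an arbitrary open $U$, thereby showing that the chain map $S$ induces a bijection on the cohomology of global sections over $U$, and only afterwards deduces the fine-resolution property of the BGG complex from that of the twisted de Rham complex. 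You instead prove local exactness of the BGG complex in positive degrees, identify $\ker D_0$ explicitly with the parallel sections (a point the paper leaves implicit), and then invoke the abstract fact that two fine resolutions of the same sheaf compute the same cohomology. Your route requires the additional (true, but worth stating) observation that the sheaves $U\mapsto\Ga(\Upsilon_k|_U)$ are fine, and it buys a more explicit description of the degree-zero cohomology; the paper's route stays at the cochain level and avoids sheaf-cohomology machinery. Both are complete arguments.
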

\begin{proof}
  For $\al\in\Ga(\Upsilon_k)$, consider
  $d^{\nabla^{\Cal V}}(S(\al))\in\Om^{k+1}(M,\Cal VM)$. By Proposition \ref{prop2.8},
  this is a section of $\ker(T)$ and its component in $\Upsilon_{k+1}$ by definition
  is $D(\al)$, so the difference to $D(\al)$ is a section of $\im(T)$. But if
  $d^{\nabla^{\Cal V}}\o d^{\nabla^{\Cal V}}=0$, we conclude that it also lies in the
  kernel of $T\o d^{\nabla^{\Cal V}}$. Hence the uniqueness part of Proposition
  \ref{prop2.8} shows that $d^{\nabla^{\Cal V}}(S(\al))=S(D(\al))$. But then
  $d^{\nabla^{\Cal V}}(S(D(\al)))=0$ and hence $D(D(\al))=0$. This
  shows that $(\Ga(\Upsilon_*),D)$ is a complex and $S$ is chain map to the twisted
  de Rham complex and hence there is an induced map in cohomology.

  Now suppose that $\ph\in\Om^k(M,\Cal VM)$ satisfies $d^{\nabla^{\Cal V}}(\ph)=0$
  and consider $\tilde\ph:=\ph-d^{\nabla^{\Cal V}}(G(\ph))$. This is cohomologous to
  $\ph$ and we know that $T(d^{\nabla^{\Cal V}}(G(\ph)))=T(\ph)$, so $\tilde\ph$ is a
  section of $\ker(T)$. Denoting by $\al$ the component of $\tilde\ph$ in
  $\Ga(\Upsilon_k)$, we immediately conclude that $\tilde\ph$ satisfies the
  properties from Proposition \ref{prop2.8} and hence $\tilde\ph=S(\al)$. Hence the
  map in cohomology is surjective. On the other hand, suppose that
  $\al\in\Ga(\Upsilon_k)$ has the property that $S(\al)=d^{\nabla^{\Cal V}}(\psi)$
  for some $\psi\in\Om^{k-1}(M,\Cal VM)$. Forming $\tilde\psi$ as above and denoting
  by $\be$ its component in $\Ga(\Upsilon_{k-1})$ we get that $\tilde\ps=S(\be)$ and
  $S(\al)=d^{\nabla^{\Cal V}}(S(\be))$ and hence $\al=D(\be)$. This shows injectivity
  of the map in cohomology and hence completes the proof of the first part. The fact
  that one obtains a fine resolution then immediate follows from the corresponding
  fact for the twisted de Rham sequence proved in Proposition \ref{prop2.6}.
\end{proof}

This generalizes the setting of \cite{weak} for smooth sections to conformally flat
Riemannian manifolds. The main difference is that we can still consider the
individual ``rows'' $\Om^*(M,\Cal V_iM)$ of the twisted complex with $d^\nabla$
acting on them, but they are not complexes any more. So for further applications, say
in the direction of \cite{Poincare}, one would have to start from the full twisted
complex, which comes from a flat connection.

\subsection{The conformally non-flat case}\label{2.10}
Without the assumption on conformal flatness, one does not obtain complexes and the
connection between the twisted sequence and the BGG sequence is less tight. Most
applications so far were to the study of the first operator in the BGG sequence, but
there certainly is very interesting potential in analyzing the rest of the
sequence. The general facts are collected in the following result.

\begin{thm}\label{thm2.10}
  Fix a Riemannian manifold $(M,g)$ and an irreducible representation $\Bbb V$ of
  $O(b)$ and consider the corresponding bundle $\Cal VM=\oplus_i \Cal V_iM$ and the
  connection $\nabla^{\Cal V}$.

  (1) Suppose that $s\in\Ga(\Cal VM)$ satisfies $\nabla^{\Cal V}s=0$. Then for the
  component $s_0\in\Ga(\Cal V_0M)$, we get $s=S(s_0)$ and $D(s_0)=0$. Hence mapping
  to the $\Cal V_0M$-component identifies the space of parallel sections of $\Cal VM$
  with a linear subspace of the kernel of the first BGG operator.

  (2) For $k>0$, projection to $\Ga(\Cal H_k^{\Cal V}M)$ identifies
  $\ker(T)\cap\ker(d^{\nabla^{\Cal V}})$ with a linear subspace of the kernel of
  $D:\Ga(\Cal H_k^{\Cal V}M)\to \Ga(\Cal H_{k+1}^{\Cal V}M)$. 
\end{thm}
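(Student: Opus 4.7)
The plan is to reduce both claims to the uniqueness clause of Proposition~\ref{prop2.8}, by exhibiting in each case the given object as the splitting operator applied to its own cohomology projection. Once $s$ (respectively $\ph$) is recognised as $S(\al)$ for $\al$ its cohomology component, the vanishing of $D(\al)$ follows directly from the definition of $D$ together with $d^{\nabla^{\Cal V}}s=0$ (respectively $d^{\nabla^{\Cal V}}\ph=0$).

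For part (1), the preparatory step is to identify the projection $\Ga(\Cal VM)\to\Ga(\Cal H_0^{\Cal V}M)$ with $s\mapsto s_0$. This follows from Kostant's result recalled in \S\ref{2.7}, namely $H^0(\frak g_{-1},\Bbb V)\cong\Bbb V_0$, combined with the fact that $\im(\partial)=0$ in degree zero; consequently $\Upsilon_0=\Cal V_0M$. I would then verify the three characterising properties of $S(s_0)$ for $s$ itself. Two of them, $T(s)=0$ and $T(d^{\nabla^{\Cal V}}s)=0$, are automatic because $T$ vanishes in degree $0$ and $d^{\nabla^{\Cal V}}s=0$ by parallelism. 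The middle condition $s-s_0\in\Ga(\im(T))$ follows from \eqref{T-spaces}, which together with the degree-zero description gives $\im(T)=\ker(\partial)^\perp=\bigoplus_{i\geq 1}\Cal V_iM$. Uniqueness in Proposition~\ref{prop2.8} then forces $s=S(s_0)$, so $D(s_0)$, the $\Upsilon_1$-component of $d^{\nabla^{\Cal V}}(S(s_0))=0$, vanishes. Injectivity of $s\mapsto s_0$ is automatic since $S$ is linear and $S(0)=0$.

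Part (2) follows the same recipe. The key structural input is the fibrewise orthogonal decomposition $\La^kT^*M\otimes\Cal VM=\im(\partial)\oplus\Upsilon_k\oplus\im(T)$, which is immediate from $\ker(T)=\im(\partial)^\perp$ and $\im(T)=\ker(\partial)^\perp$ in \eqref{T-spaces} together with the definition $\Upsilon_k=\ker(\partial)\cap\ker(T)$. Consequently $\ker(T)=\Upsilon_k\oplus\im(T)$, and any $\ph\in\Ga(\ker(T))$ writes uniquely as $\ph=\al+\gamma$ with $\al\in\Ga(\Upsilon_k)$ and $\gamma\in\Ga(\im(T))$, where $\al$ is exactly the image of $\ph$ in $\Cal H_k^{\Cal V}M$ under the isomorphism $\Upsilon_k\cong\Cal H_k^{\Cal V}M$. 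Assuming in addition $d^{\nabla^{\Cal V}}\ph=0$, the three conditions of Proposition~\ref{prop2.8} applied to $\ph$ are immediate: $T(\ph)=0$ by hypothesis, $\ph-\al=\gamma\in\Ga(\im(T))$ by construction, and $T(d^{\nabla^{\Cal V}}\ph)=T(0)=0$. Thus $\ph=S(\al)$, and $D(\al)$ is the projection of $d^{\nabla^{\Cal V}}(\ph)=0$ to $\Ga(\Upsilon_{k+1})$, hence zero. Injectivity of $\ph\mapsto\al$ on $\ker(T)\cap\ker(d^{\nabla^{\Cal V}})$ again follows from $S(0)=0$.

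I do not anticipate any serious obstacle. The only non-formal ingredient is the identification $\Upsilon_0\cong\Cal V_0M$ in part (1), which rests on Kostant's theorem but is in any case very classical. Everything else is a direct, almost mechanical, invocation of the uniqueness clause of Proposition~\ref{prop2.8}, which was engineered precisely to make statements of this flavour easy.
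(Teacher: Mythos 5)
Your proposal is correct and follows essentially the same route as the paper: in both cases one observes that the given section lies in $\ker(T)$, that its difference from its $\Upsilon_k$-component lies in $\Ga(\im(T))$, and that $T(d^{\nabla^{\Cal V}}\ph)=0$, so the uniqueness clause of Proposition~\ref{prop2.8} forces $\ph=S(\al)$ and hence $D(\al)=0$, with injectivity coming from $S(0)=0$. The only difference is presentational --- the paper runs the argument once in general degree and then specializes to $k=0$, while you treat degree zero first with the explicit identifications $\Upsilon_0=\Cal V_0M$ and $\im(T)=\oplus_{i\geq 1}\Cal V_iM$ --- which changes nothing of substance.
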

\begin{proof}
The basic argument is the same for both parts. Suppose that $T(\ph)=d^{\Cal V}\ph=0$
and let $\al$ denote the projection of $\ph$ to $\Ga(\Upsilon_k)$. Then
$\ph-\al\in\Ga(\im(T))$ and hence Proposition \ref{prop2.6} implies that $\ph=S(\al)$,
which in turn implies that $D(\al)=0$. On the other hand, it also shows that the
projection to the component $\Ga(\Upsilon_k)$ is injective, which completes the argument
for (2). In degree zero $T(s)=0$ is satisfied automatically and $d^{\nabla^{\Cal
    V}}=\nabla^{\Cal V}$ so (1) follows, too.
\end{proof}

Following \cite{Leitner}, it has become common in the setting of parabolic geometries
to call sections $\si\in\Ga(\Cal V_0M)$, for which $S(\si)\in\Ga(\Cal VM)$ is
parallel for $\nabla^{\Cal V}$, \textit{normal solutions} of the first BGG
operator. In several cases, these can be explicitly characterized by interesting
(tensorial or differential) conditions. For example, starting with the adjoint
representation $\frak o(b)$, the first BGG operator is the conformal Killing operator
on vector fields, so its kernel consists of all conformal Killing fields on
$M$. Normal conformal Killing field then turn out to be exactly those, which in
addition insert trivially into the Weyl curvature and into the Cotton-York tensor,
see \cite{deformations}.

The reason why we have singled out the case $k=0$ in Theorem \ref{thm2.10} is because
the result in degree zero turns out to be significantly stronger. Indeed, it was
shown in \cite{BCEG} (in a more general context) that one can modify the connection
$\nabla^{\Cal V}$ in such a way that projection to the component in $\Cal V_0M$
induces a bijection between parallel sections of this new connection and the kernel
of the first BGG operator. So in particular, the kernel of the first BGG operator is
of dimension $\leq\dim(\Bbb V)$ (which is an interesting result in its own
right). This actually extends to operators with the same principal part as the first
BGG operator. While the construction in \cite{BCEG} does not provide a natural
construction of these so-called ``prolongation connections'', it turned out that
there also is an invariant construction in the general setting of parabolic
geometries, see \cite{prolongation}.

\subsection{Example}\label{2.11} 
Let us discuss the case $\Bbb V=\Bbb R^{n+2}$ of the standard representation in
detail. From \S \ref{2.7} we know that $H^0(\frak g_{-1},\Bbb V)=\Bbb V_0\cong\Bbb
R$, $H^1(\frak g_{-1},\Bbb V)\cong S^2_0\Bbb R^{n*}\subset \frak g_{-1}^*\otimes\Bbb
V_1$, and that for $2\leq k<n$, $H^k(\frak g_{-1},\Bbb V)\subset\La^k\frak
g_{-1}^*\otimes\Bbb V_1$. To compute the splitting operator in degree zero, we use
the notation from Example \ref{2.4} (1), so we write $s\in\Ga(\Cal VM)$ as
$(f,\eta,h)$ with $f,h\in C^\infty(M,\Bbb R)$ and $\eta\in\frak X(M)$. By definition,
\begin{equation}\label{nab-std}
\nabla^{\Cal
  V}_\xi(f,\eta,h)=(df(\xi)-g(\xi,\eta),\nabla_\xi\eta+h\xi+f\Rho(\xi),dh(\xi)-\Rho(\xi,\eta)).
\end{equation}
By Proposition \ref{prop2.8}, we can obtain the formula for $S(f)$ by choosing $\eta$
and $h$ in such a way that $T(\nabla^{\Cal V}_\xi(f,\eta,h))=0$. The discussion in \S
\ref{2.7} shows that this means that the first component in the right hand side of
\eqref{nab-std} has to vanish, while the second component has to be trace-free. The
first condition is equivalent to $\eta=df^\#$. Inserting this into the second
component (with varying $\xi$) and taking the trace, we obtain $0=\Delta
f+nh+f\tr(\Rho)$ which implies $h=-\tfrac{1}{n}(\Delta+\tr(\Rho))f$. Inserting these
into \eqref{nab-std} the middle component becomes the trace-free part of
$\nabla^2f+\Rho f$ and since this is automatically symmetric, it already lies in
$\Upsilon_1$, so we have obtained the formula for the first BGG operator in this
case. Thus we obtain
\begin{equation}\label{std-ops}
S(f)=(f,df^\#,-\tfrac{1}{n}(\Delta+\tr(\Rho))f) \qquad D(f)=\text{tfp}(\nabla^2f+\Rho f),
\end{equation}
where we write $\text{tfp}$ to indicate the trace-free part of a $\binom02$-tensor
field. It turns out that in this case any solution $D$ is normal, i.e.\ if $D(f)=0$
then $\nabla^{\Cal V}S(f)=0$ and that these solutions are related to conformal
rescalings of $g$ which are Einstein metrics, see \cite{BEG}. 

\medskip

In degrees $1$,\dots, $n-1$, both the splitting operators and the BGG operators are
easier to get. Sections of $\La^kT^*M\otimes\Cal V_1M$ can be viewed as
$\binom1k$-tensor fields, which are skew symmetric in the $k$ lower indices. From \S
\ref{2.7} we conclude that $T: \La^kT^*M\otimes\Cal V_1M\to\La^{k-1}T^*M$ must be a
non-zero multiple of the contraction, while
$\partial:\La^kT^*M\otimes\Cal V_1M\to\La^{k+1}T^*M$, up to a non-zero multiple, must
be given by lowering the upper index and the completely alternating. We also know
that $\Upsilon_k=\ker(T)\cap\ker(\partial)$. For a section
$\psi_1\in\Ga(\Upsilon_k)$, we know that $\partial\ps_1=0$, which together with
Proposition \ref{prop2.6} shows that $d^{\nabla^{\Cal V}}(0,\psi_1,\psi_2)$ has
vanishing first component, while the second component is given by
$d^\nabla\psi_1+\partial\psi_2$. The form $S(\psi_1)$ is characterized by the fact
that this lies in the kernel of $T$, which says that $\psi_2=-T(d^\nabla\psi_1)$. So
up to a non-zero factor, this is obtained by alternating $\nabla\psi_1$ in the lower
indices and then forming the unique contraction.

This also shows that $D(\psi_1)$ is the component of the tracefree part of
$d^{\nabla}\psi_1$ that lies in the kernel of the complete alternation. This can be
expressed as
$$
d^{\nabla}\psi_1-\partial(T(d^\nabla\psi_1))-T(\partial(d^{\nabla}\psi_1)),
$$
so to obtain a more explicit formula, one only has to make the operations $T$ and
$\partial$ explicit.  There is a simplification, however. From the definition of
$d^{\nabla^{\Cal V}}$ it follows that the component of
$d^{\nabla^{\Cal V}}\o d^{\nabla^{\Cal V}}$ that maps $\La^kT^*M\otimes\Cal V_i$ to
$\La^{k+2}T^*M\otimes\Cal V_{k-1}$ is given by
$d^\nabla\o\partial+\partial\o d^\nabla$. However, this component vanishes by
Proposition \ref{prop2.6}, so
$\partial(d^{\nabla}\psi_1)=-d^{\nabla}(\partial\psi_1)=0$. Hence $D(\psi_1)$ is the
tracefree part of $d^\nabla\psi_1$, i.e.\
$$ D(\psi_1)=d^{\nabla}\psi_1-\partial(T(d^\nabla\psi_1)).
$$
While the above description of the splitting operator extends to $k=n-1$,
i.e.\ $S(\psi_1)=(0,\psi_1,-T(d^\nabla\psi_1))$, things are a bit different for the
last BGG operator. The point here is that $T:\La^{n-1}T^*M\to \La^nT^*M\otimes TM$ is
a linear isomorphism, which is reflected in the fact that
$\Upsilon_n\subset\La^nT^*M\otimes\Cal V_2M$ (and since this is a line bundle, we
must have equality). Hence the first two components of $d^{\nabla^{\Cal V}}S(\psi_1)$
already vanish in this case, so it has to have the form $(0,0,D(\psi_1))$. Explicitly
$D(\psi_1)=-d^{\nabla}(T(d^\nabla\psi_1))-\partial^{\Rho}(\psi_1)$.

\subsection{Some general results}\label{2.12}
Several properties observed in \S \ref{2.11} above reflect general features that
occur in all BGG sequences, in particular, this concerns the first BGG operators. As
initially observed in \cite{BCEG}, the order $r$ of the first BGG operator can be
easily read off from the highest weight of the representation $\Bbb V$ that defines
the BGG sequence in question. Moreover, it is shown there, that mapping $\Bbb V$ to
$(\Bbb V_0,r)$ actually gives rise to a bijection between irreducible
representations of $O(b)$ and pairs consisting of an irreducible representation of
$O(n)$ and an integer $r\geq 1$. So an irreducible domain bundle and the order of
the first BGG operator can be chosen arbitrarily and then give rise to a unique BGG
sequence realizing a first operator with these properties. There also is a universal
description of the target bundle of the first BGG operator in representation theory
terms, which loosely can be described as that ``largest irreducible component'' in
$S^r_0T^*M\otimes\Cal V_0M$. It is also known that there is a unique natural
projection from $S^rT^*M\otimes\Cal V_0M$ onto this component and the principal part
of the first BGG operator is obtained by applying this projection to a symmetrized
$r$-fold covariant derivative.

\smallskip

In the form of the discussion in higher degrees in \S \ref{2.11} (i.e.\ without
making the tensorial operations $T$, $\partial$ and $\partial^{\Rho}$ explicit), one
can derive universal formulae for BGG operators of low order. Several results in that
direction (also in higher order) are available in the literature, both in the setting
of parabolic geometries, see e.g.\ \cites{AHS3,CDS,SloSou} and for BGG-like
constructions on domains in $\Bbb R^n$, see e.g.\ \cite{Arnold-Hu}.

The first important observation here is that the order of BGG operators is directly
linked to the location of the spaces $\Upsilon_k$ (respectively their components for
$n$ even and $k=n/2$). We know in general that $\Upsilon_0=\Cal V_0M$ and for the
standard representation, we have observed above that
$\Upsilon_k\subset\La^kT^*M\otimes\Cal V_1M$ for $k=1,\dots,n-1$, while
$\Upsilon_n=\La^nT^*M\otimes\Cal V_2M$. We have also observed there that the first
and last operator in the sequence have order two, while all other operators are of
order one.

In the cases we consider, it turns out that if a component of $\Upsilon_k$ sits in
$\La^kT^*M\otimes\Cal V_iM$ then each component of $\Upsilon_{k+1}$ sits in
$\La^{k+1}T^*M\otimes\Cal V_jM$ with $j\geq i$ and (the relevant component of) the
BGG operator is of order $j-i+1$. Using this, we derive universal formulae for BGG
operators of order $1$ and $2$ in our setting, generalizations to higher order are
straightforward in principle, but lead to intricate formulae quickly. For
simplicity, we ignore the possibility of more than one component in the formulation,
the necessary changes are quite obvious.

\begin{thm}\label{thm2.12}
  Consider the BGG sequence induced by a representation $\Bbb V$ of $O(b)$ and a
  section $\psi\in\Ga(\Upsilon_k)\subset\Om^k(M,\Cal VM)$ and the BGG operator
  $D:\Ga(\Upsilon_k)\to\Ga(\Upsilon_{k+1})$.

  (1) If $D$ has order $1$, then $D(\psi)=d^\nabla\psi-\partial(T(d^\nabla\psi)))$.

  (2) If $D$ has order $2$, then
  $$
    D(\psi)=(\id-\partial\o
    T-T\o\partial)\left(-d^\nabla(T(d^\nabla\psi))-\partial^{\Rho}(\psi)\right). 
  $$
\end{thm}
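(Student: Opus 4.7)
The plan is to exploit the component structure of the splitting operator $S(\psi)$ with respect to the grading $\Cal VM=\oplus_j \Cal V_jM$, and then to apply the orthogonal projection onto $\Upsilon_{k+1}$ to the relevant component of $d^{\nabla^{\Cal V}}S(\psi)$. For simplicity assume (as in the statement) that $\Upsilon_k \subset \La^k T^*M\otimes\Cal V_iM$ sits in a single $\Cal V$-degree $i$. The stated correspondence between the order of $D$ and the $\Cal V$-grading then means that in (1) the target $\Upsilon_{k+1}$ sits in $\La^{k+1}T^*M\otimes\Cal V_iM$, while in (2) it sits in $\La^{k+1}T^*M\otimes\Cal V_{i+1}M$.

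The key preparatory observations are three. First, combining \eqref{T-spaces} and \eqref{T-comp} with $\partial^2=T^2=0$ yields the Hodge-type orthogonal decomposition
\[
\La^k T^*M\otimes\Cal VM = \Upsilon_k \oplus \im(\partial)\oplus\im(T),
\]
with $\partial\o T$ acting as the identity on $\im(\partial)$ and as zero on $\Upsilon_k\oplus\im(T)$, and symmetrically for $T\o\partial$ on $\im(T)$. Hence $\pi_{\Upsilon}:=\id-\partial\o T-T\o\partial$ is the projection onto $\Upsilon_k$. Second, since $S(\psi)-\psi\in\Ga(\im(T))$ is orthogonal to $\Upsilon_k\subset\ker(T)$, we have $S(\psi)_i=\psi$ and $S(\psi)_j\in\Ga(\im(T))$ for $j>i$, while $S(\psi)_j=0$ for $j<i$. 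Third, the right hand side of Proposition \ref{prop2.6}(2) depends only on $\ph_i$ and $\ph_{i-1}$; expanding its left hand side via Proposition \ref{prop2.6}(1) produces terms involving $\ph_{i+1}$, namely $(d^\nabla\o\partial+\partial\o d^\nabla)\ph_{i+1}$, which must therefore vanish identically. So $d^\nabla\o\partial+\partial\o d^\nabla=0$, and combined with $\partial\psi=0$ this forces $\partial(d^\nabla\psi)=0$ and hence $T(\partial(d^\nabla\psi))=0$.

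For (1), the $\Cal V$-degree $i$ component of $d^{\nabla^{\Cal V}}S(\psi)$, computed from Proposition \ref{prop2.6}(1), equals $d^\nabla\psi+\partial(S(\psi)_{i+1})$. The second summand lies in $\im(\partial)$ and is killed by $\pi_{\Upsilon}$, while $T(\partial(d^\nabla\psi))=0$ by the preparatory step. Applying $\pi_{\Upsilon}$ therefore yields $D(\psi)=d^\nabla\psi-\partial(T(d^\nabla\psi))$. For (2), the splitting operator has a nontrivial correction in $\Cal V$-degree $i+1$; the defining equation $T(d^{\nabla^{\Cal V}}S(\psi))=0$ from Proposition \ref{prop2.8}, read in $\Cal V$-degree $i$, reduces to $T(d^\nabla\psi)+T(\partial(S(\psi)_{i+1}))=0$. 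Since $S(\psi)_{i+1}\in\im(T)$ and $T\o\partial$ acts as the identity there, this pins down $S(\psi)_{i+1}=-T(d^\nabla\psi)$. Plugging this into Proposition \ref{prop2.6}(1) in $\Cal V$-degree $i+1$ gives
\[
(d^{\nabla^{\Cal V}}S(\psi))_{i+1} = -d^\nabla(T(d^\nabla\psi)) + \partial(S(\psi)_{i+2}) - \partial^{\Rho}(\psi),
\]
where the middle summand lies in $\im(\partial)$ and is annihilated by $\pi_{\Upsilon}$, and the sign on the $\partial^\Rho\psi$-term is dictated by the convention of Proposition \ref{prop2.6}(1). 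Applying $\pi_{\Upsilon}$ to what remains gives the claimed formula.

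The main technical hurdle is not any single calculation but the need to keep the $\Cal V$-grading bookkeeping straight across Propositions \ref{prop2.6} and \ref{prop2.8}; once the component structure of $S(\psi)$ is identified and the projector $\pi_{\Upsilon}=\id-\partial\o T-T\o\partial$ is in hand, the only genuine algebraic input is the cancellation identity $d^\nabla\o\partial+\partial\o d^\nabla=0$ read off the vanishing $\ph_{i+1}$-contribution in Proposition \ref{prop2.6}(2).
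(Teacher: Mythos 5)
Your proof is correct and follows essentially the same route as the paper: determine $S(\psi)_{i+1}=-T(d^\nabla\psi)$ from the condition $T(d^{\nabla^{\Cal V}}S(\psi))=0$, read off the relevant graded component of $d^{\nabla^{\Cal V}}S(\psi)$ via Proposition \ref{prop2.6}, and use the anticommutation $d^\nabla\o\partial+\partial\o d^\nabla=0$ extracted from Proposition \ref{prop2.6}(2). The only (harmless) cosmetic difference is that in (2) you dispose of the $\partial(S(\psi)_{i+2})$ term via $(\id-\partial\o T-T\o\partial)\o\partial=0$ instead of computing $S(\psi)_{i+2}$ explicitly as the paper does, and in (1) you apply the projector rather than observing that the expression already lies in $\ker(\partial)\cap\ker(T)=\Upsilon_{k+1}$.
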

\begin{proof}
We know that the is an index $i$ such that $\Upsilon_k\subset\La^kT^*M\otimes\Cal
V_iM$ and then for $\psi\in\Ga(\Upsilon_k)$ we get $S(\psi)_j=0$ for $j<i$ and
$S(\psi)_i=\psi$.

(1) By assumption, $\Upsilon_{k+1}\subset\La^{k+1}T^*M\otimes\Cal V_iM$ and the
component $d^{\nabla^{\Cal V}}(S(\psi))_i$ is by definition given by
$d^{\nabla}\psi+\partial(S(\psi)_{i+1})$. As in \S \ref{2.11}, the characterization of
the splitting operator implies that $S(\psi)_{i+1}=-T(d^{\nabla}\psi)$, so
$d^{\nabla^{\Cal V}}(S(\psi))_i=d^\nabla\psi-\partial(T(d^{\nabla}\psi))$ and this
lies in $\ker(T)$ by construction. As in \S \ref{2.11}, Proposition \ref{prop2.6}
implies that $\partial\o d^\nabla=-d^\nabla\o \partial$ and by assumption
$\partial(\psi)=0$. Hence $d^{\nabla^{\Cal V}}(S(\psi))_i$ lies in $\ker(\partial)$
and hence is a section of $\Upsilon_{k+1}$ which coincides with $D(\psi)$ by
definition.

(2) Here $\Upsilon_{k+1}\subset\La^{k+1}T^*M\otimes\Cal V_{i+1}M$ and by definition
\begin{equation}\label{formula}
d^{\nabla^{\Cal V}}(S(\psi))_{i+1}=
d^{\nabla}(S(\psi)_{i+1})+\partial(S(\psi)_{i+2})-\partial^{\Rho}(\psi).
\end{equation}
As in (1), we see that $S(\psi)_{i+1}=-T(d^{\nabla}\psi)$. We also know that
\eqref{formula} has to lie in the kernel of $T$, which easily implies that
$$
S(\psi)_{i+2}=T(d^\nabla( T(d^\nabla(\psi))))+T(\partial^{\Rho}(\psi)), 
$$
whence the right hand side of \eqref{formula} can be written as
$$
(\id-\partial\o T)(-d^\nabla( T(d^\nabla(\ps)))-\partial^{\Rho}(\psi)). 
$$
To obtain $D(\psi)$, this has to be projected to $\ker(\partial)$, so we have to
apply $(\id-T\o \partial)$ to it. Since $\partial\o\partial=0$, this exactly leads to
the claimed formula. 
\end{proof}

Together with the above discussion, part (1) of this theorem implies that all
conformal Killing operators arise as first BGG operators. Fixing an irreducible
representation $\Bbb V_0$ of $O(n)$, one takes the irreducible representation $\Bbb V$
of $O(b)$ corresponding to $(\Bbb V_0,1)$. The corresponding BGG sequence starts with
a first order operators defined on $\Cal V_0M$ and by part (1) of Theorem
\ref{thm2.12} this is simply given by the projection of the covariant derivative to
the subbundle $\Cal H_1^{\Cal V}M$. This subbundle corresponds to the maximal
irreducible component in $\Bbb R^{n*}\otimes\Bbb V_0$. For example, if $\Bbb V_0=\Bbb
R^n\cong\Bbb R^{n*}$, then this is $S^2_0\Bbb R^{n*}$, while for $\Bbb
V_0=S^\ell_0\Bbb R^{n*}$ with $\ell\geq 2$, one obtains $S^{\ell+1}_0\Bbb R^{n*}$, so
these lead to the conformal Killing operator on vector fields and on trace-free
symmetric tensor fields. Likewise, putting $\Bbb V_0=\La^\ell\Bbb R^{n*}$, one
obtains the intersection of the trace-free part with the kernel of the complete
alternation in $\La^\ell\Bbb R^{n*}\otimes\Bbb R^{n*}$. The corresponding operators
are often referred to as conformal Killing-Yano operators on differential forms.

\section{Projective BGG sequences}\label{3}
Apart from conformal geometry, there is a second parabolic geometry that underlies a
Riemannian metric, namely a \textit{projective structure}. Basically, this structure
on $(M,g)$ is defined by the geodesics of the Levi-Civita connection $\nabla$, viewed
as paths (unparametrized curves). Equivalently, it can be described via the class of
all linear connections $\hat\nabla$ on $TM$, which have the same geodesics as
$\nabla$ up to parametrization. Similarly to conformally invariant operators, this
leads to the concept of projectively invariant differential operators, which have the
same expression in terms of all connections in this projective equivalence class.

Examples of BGG sequences coming from projective differential geometry on domains in
$\Bbb R^n$ are very important in applications. In particular, there is a projectively
invariant version of the Riemannian deformation sequence which in this context is
also known as the \textit{Calabi complex} or the \textit{fundamental complex of
  linear elasticity} and this is heavily used in applied mathematics. Concerning
generalizations to the context of Riemannian geometry, there is a drawback of the
projective BGG sequences, however. They are complexes only in a projectively flat
setting, i.e.\ in the case that the projective equivalence class of connections
contains a flat connection. By a classical result of Beltrami, this condition is very
restrictive for Levi-Civita connections, it turns out to be equivalent to constant
sectional curvature. Hence complexes are only obtained on space forms in this
setting, but the sequences are also available and interesting in projectively
non-flat cases.

To cover more cases in which complexes are obtained, we use a more general setting in
this part of the article that we will describe next. Even in this more general
setting, things are strictly parallel to the discussion in \S \ref{2} and in
particular, we use analogous notation to there throughout the discussion.

\subsection{Background and algebraic setup for projective BGG sequences}\label{3.1}
Rather than starting from a Riemannian manifold $(M,g)$ we will start from a pair
$(M,\nabla)$ where $M$ is an orientable manifold of fixed dimension $n$ and $\nabla$
is a torsion-free linear connection on the tangent bundle $TM$. In addition, we
assume that $\nabla$ preserves a volume form and we fix such a form $\nu\in\Om^n(M)$,
so this is parallel for the induced connection on $\La^nT^*M$ and it gives $M$ an
orientation. Analogous to the orthonormal frame bundle discussed in \S \ref{2.1}, we
then have a volume preserving frame bundle $\Cal{SL}M$ with structure group
$SL(n,\Bbb R)$ and $\nabla$ is induced by a principal connection on that
bundle. Hence representations of $SL(n,\Bbb R)$ give rise to associated bundles and
$SL(n,\Bbb R)$-equivariant maps induce bundle maps that are parallel for the
appropriate induced connection. As in \S \ref{2.1} the standard representation
$\Bbb R^n$ of $SL(n,\Bbb R)$ induces $TM$ and the dual representation $\Bbb R^{n*}$
induces $T^*M$. Via tensorial constructions this leads to all types of tensor bundles
(possibly with additional symmetry properties). The adjoint representation induces
the bundle $\frak{sl}(TM)$ of trace-free endomorphisms of $TM$, the curvature of
$\nabla$ can be viewed as an element of $\Om^2(M,\frak{sl}(TM))$, and the analog of
formula \eqref{R-on-W} holds for all induced connections. Note that outside of the
Riemannian setting, one has to carefully distinguish between $TM$ and $T^*M$ and
correspondingly between tensor fields of different types.

\medskip

The algebraic setup for projective BGG sequences starts from the Lie group
$G:=SL(n+1,\Bbb R)$ and we denote the standard basis of $\Bbb R^{n+1}$ as
$e_0,e_1,\dots,e_n$. Then there is an obvious inclusion
$SL(n,\Bbb R)\hookrightarrow G$ as those maps that fix $e_0$ and preserve the
subspace spanned by $e_1,\dots,e_n$. The Lie algebra $\frak g$ of $G$ consists of
all trace-free matrices of size $(n+1)\x (n+1)$ and we write such matrices in a block
form with blocks of size $1$ and $n$ as
\begin{equation}\label{sl(n+1)}
  \begin{pmatrix} a & Z \\ X & A \end{pmatrix} \text{\
    with\ } a\in\Bbb R, X\in\Bbb R^n, Z\in\Bbb R^{n*} \text{\ and\ } a+\tr(A)=0.  
\end{equation}
Of course, the Lie subalgebra of $SL(n,\Bbb R)\subset G$ corresponds
to the matrices with $a=X=Z=0$ (and hence $\tr(A)=0$). Exactly as in \S \ref{2.2},
the block form defines a $|1|$-grading of $\frak g$, with components spanned by $X$,
$(a,A)$ and $Z$, respectively. Also, the bracket defines an extension to $\frak g_0$ of
the standard representation of $\frak{sl}(n,\Bbb R)$ on $\Bbb R^n\cong\frak g_{-1}$
and its dual representation on $\Bbb R^{n*}\cong\frak g_1$. A complement to
$\frak{sl}(n,\Bbb R)$ in $\frak g_0$ is spanned by the element $E$ corresponding to
$a=\frac{n}{n+1}$ and $A=\frac{-1}{n+1}\Bbb I$, which acts as a grading element. This
also shows that $\frak g_0\cong\frak{gl}(n,\Bbb R)$, so the corresponding associated
bundle can be identified with $\frak{gl}(TM)\cong T^*M\otimes TM$.

The block form and hence the $|1|$-grading of $\frak g$ is determined by the
decomposition $\Bbb R^{n+1}=\Bbb R^n\oplus\Bbb R$ with the summands spanned by
$e_1,\dots,e_n$ and by $e_0$, respectively. Since any irreducible representation
$\Bbb V$ of $G$ can be obtained from $\Bbb R^{n+1}$ via tensorial constructions, we
as in \S \ref{2.2} obtain a grading $\Bbb V=\oplus_{j=0}^N\Bbb V_j$, such that $\frak
g_i\cdot \Bbb V_j\subset\Bbb V_{i+j}$. As in \S \ref{2.3}, $\Bbb V$ gives rise to an
associated bundle of $\Cal{SL}M$, which we denote by $\Cal VM=\oplus_{j=0}^N\Cal
V_jM$. We also obtain bundle maps $\bullet$ as there, for which the analogs of equations
\eqref{Leibniz2}--\eqref{equiv-T*} hold. In the interpretations of the latter
equations, one only has to be careful to use the right extensions of representations
of $\frak{sl}(n,\Bbb R)$ to $\frak g_0\cong\frak{gl}(n,\Bbb R)$.

\subsection{Examples}\label{3.2}
(1) For the standard representation $\Bbb V=\Bbb R^{n+1}$ of $G$ we know that $\Bbb
V=\Bbb V_0\oplus\Bbb V_1$ and as representations of $\frak{sl}(n,\Bbb R)$ this equals
$\Bbb R^n\oplus\Bbb R$. Hence sections can be written as pairs $(\eta,f)$ with
$\eta\in \frak X(M)$ and $f\in C^\infty(M,\Bbb R)$. The bundle maps $\bullet:TM\x
\Cal VM\to\Cal VM$ and $\bullet:T^*M\x \Cal VM\to\Cal VM$ are characterized by
$\xi\bullet(\eta,f)=(f\xi,0)$ and $\al\bullet(\eta,f)=(0,\al(\eta))$. Here the
analogs of \eqref{abelian-1} and \eqref{abelian1} are satisfied trivially since both
sides are $0$. To interpret the analogs of equations \eqref{equiv-T} and
\eqref{equiv-T*} on all of $\frak g_0$, one has to take into account that the
grading element $E$ which corresponds to $-\id_{TM}$ acts via $(\eta,f)\mapsto
(\tfrac{-1}{n+1}\eta,\tfrac{n}{n+1}f)$.

For the dual representation $\Bbb V^*$, one similarly obtains $\Bbb V^*=\Bbb
R\oplus\Bbb R^{n*}$, so sections can be written as pairs $(f,\ph)$ with $f\in
C^\infty(M,\Bbb R)$ and $\ph\in\Om^1(M)$. The definition of the dual action readily
leads to $\xi\bullet(f,\ph)=(-\ph(\xi),0)$ and $\al\bullet(f,\ph)=(0,-f\al)$.

(2) Starting from these two basic examples, one can apply tensorial constructions as
in \S \ref{2.4} to pass to general irreducible representations of $G$. Let us discuss
the example $\La^2\Bbb V^*=\Bbb R^{n*}\oplus \La^2\Bbb R^{n*}$ which is relevant for
elasticity. Sections of the corresponding bundle can be written as $(\ph,\ps)$ with
$\ph\in\Om^1(M)$ and $\ps\in\Om^2(M)$, and we can realize $(\ph,0)$ as $(0,\ph)\wedge
(1,0)$ and $(0,\psi_1\wedge\psi_2)$ as $(0,\psi_1)\wedge(0,\psi_2)$. Using this, one
easily verifies that $\xi\bullet(\ph,\psi)=(-\psi(\xi,\_),0)$ and $\al\bullet
(\ph,\psi)=(0,\al\wedge\ph)$.

Likewise, we can consider $S^2\Bbb V^*=\Bbb R\oplus\Bbb R^{n*}\oplus S^2\Bbb R^{n*}$,
so sections can be written as triples $(f,\ph,\Ph)$, where $f\in C^\infty(M,\Bbb R)$,
$\ph\in\Om^1(M)$ and $\Ph$ is a symmetric $\binom02$-tensor field. Via the
realizations $(f,0,0)=(f,0)\odot (1,0)$, $(0,\ph,0)=(0,\ph)\odot (1,0)$ and
$(0,0,\ph_1\odot\ph_2)=(0,\ph_1)\odot (0,\ph_2)$ one easily computes that
$$
\xi\bullet (f,\ph,\Ph)=(-\ph(\xi),-\Ph(\xi,\_),0)\qquad
\al\bullet(f,\ph,\Ph)=(0,-2f\al,-\al\odot\ph). 
$$

(3) Let us finally look at the adjoint representation $\Bbb W:=\frak g$ in a similar
spirit as in the conformal case. We already know that $\Cal
WM=TM\oplus\frak{gl}(TM)\oplus T^*M$ so we write sections as $(\zeta,\Ph,\ph)$ with
$\zeta\in\frak X(M)$, $\ph\in\Om^1(M)$ and $\Ph$ a $\binom11$-tensor field that we
interpret as an endomorphism of $TM$. As in the conformal case, the main step towards
understanding this is to determine the bilinear map $\{\ ,\ \}:\frak
X(M)\x\Om^1(M)\to\frak{gl}(TM)$ induced by the bracket $\frak g_{-1}\x\frak
g_1\to\frak g_0$. To determine this, we again have to compute $[[X,Z],Y]\in\frak
g_{-1}$ for $X,Y\in\frak g_{-1}$ and $Z\in\frak g_1$, which easily implies that
\begin{equation}\label{bracket-sl}
    \{\eta,\ph\}(\xi)=\ph(\xi)\eta-\ph(\eta)\xi.
\end{equation}
In terms of this operation, we then get
$$
\eta\bullet (\zeta,\Ph,\ph)=(-\Ph(\eta),\{\eta,\ph\},0)\qquad
\al\bullet(\zeta,\Ph,\ph)=(0,-\{\zeta,\al\},\al\o\Ph).
$$

\subsection{The twisted de Rham sequence}\label{3.3}
Using the operations we have just introduced, the discussion of curvature in the
projective setting looks formally almost identical to the discussion in the conformal
case. The definition of curvature is exactly as in \eqref{R-def} and the definition
of Ricci-curvature just needs an equivalent reformulation of \eqref{Ricci}. On takes a
local frame $\{\xi_i\}$ for $TM$ and the dual coframe $\si_i$ for $T^*M$ (i.e.\
$\si_i(\xi_j)=\delta_{ij}$ and defines
\begin{equation}\label{Ricci2}
 \Ric(\eta,\zeta)=\textstyle\sum_i\si_i(R(\xi_i,\eta)(\ze)). 
\end{equation}
This is equivalent to \eqref{Ricci} since a local frame $\{\xi_i\}$ it orthonormal
for $g$ if and only if the dual coframe is $\{\xi_i^\flat\}$. For general linear
connections on $TM$, the Ricci-curvature is not symmetric. Symmetry of $\Ric$ is
equivalent to the fact that $\nabla$ preserves a volume form and hence is satisfied
in the cases we consider. The general definition of the projective Schouten tensor
mixes the symmetric and the skew symmetric part of $\Ric$ with different factors, see
Section 3.1 of \cite{BEG}. In the volume preserving case, this boils down to
$\Rho=\tfrac1{n-1}\Ric$. The projective version of the Cotton-York tensor $Y$ is then
again defined by formula \eqref{Y-def}, so $Y=d^{\nabla}\Rho\in\Om^2(M,T^*M)$.

The choice of the definition of $\Rho$ is motivated by the analog of formula
\eqref{Weyl} using the operations associated to $\frak{sl}(n+1,\Bbb R)$, i.e.\
$$
 R(\xi,\eta)(\ze)=W(\xi,\eta)(\ze)+\{\xi,\Rho(\eta)\}(\ze)-\{\eta,\Rho(\xi)\}(\ze).
 $$ Using this to define the \textit{projective Weyl curvature}
 $W\in\Om^2(M,\End(TM))$ one verifies that $W$ has the same symmetries as $R$ and in
 addition any possible contraction of $W$ vanishes (so in particular it has values in
 $\frak{sl}(TM)$). Thus we obtain the decomposition of $R$ into a trace-free part and
 a trace part in the same form as in the conformal case. Also the relation of the
 curvature quantities to projective flatness is parallel to the conformal case (with
 a shift in dimension): For $n\geq 3$, $\nabla$ is projectively flat if and only if
 $W$ vanishes identically and this implies $Y\equiv 0$. If $n=2$, $W$ always vanishes
 identically and $\nabla$ is projectively flat if and only if $Y$ vanishes
 identically. Having all this at hand, we can proceed formally in exactly the same
 way as in the conformal case.

\begin{definition}\label{def3.3}
Consider an irreducible representation $\Bbb V$ of $SL(n+1,\Bbb R)$ as in \S
\ref{3.1}. Then using the operations $\bullet$ from there, we define the
\textit{twisted connection} $\nabla^{\Cal V}$ on $\Cal VM$ by
\begin{equation}
  \label{twisted-def2}
  \nabla^{\Cal V}_\xi s:=\nabla_\xi s+\xi\bullet s-\Rho(\xi)\bullet s. 
\end{equation}
\end{definition}

The proof of Theorem \ref{2.2} only uses the formal properties of the operations and
hence also implies the following.

\begin{thm}\label{thm3.3}
For $\xi,\eta\in\frak X(M)$ and $s\in\Ga(\Cal VM)$, the curvature $R^{\Cal V}$ of
$\nabla^{\Cal V}$ is given by
  $$
  R^{\Cal V}(\xi,\eta)(s)=W(\xi,\eta)\bullet s+Y(\xi,\eta)\bullet s,
  $$
  where $W$ and $Y$ are the (projective) Weyl curvature and the Cotton--York tensor
  of $\nabla$, respectively. In particular, the connection $\nabla^{\Cal V}$ is flat
  if and only if $\nabla$ is projectively flat.
\end{thm}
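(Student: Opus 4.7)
My plan is to follow \emph{verbatim} the strategy of Theorem \ref{thm2.5}, observing that its proof relied only on four formal features of the algebraic setup: the abelianness of $\frak g_{\pm 1}$ (equations \eqref{abelian-1} and \eqref{abelian1}), the Leibniz compatibility \eqref{Leibniz2} of $\bullet$ with $\nabla$, the realization of the bracket $\frak g_0\x\frak g_{-1}\to\frak g_{-1}$ through the operation $\{\ ,\ \}$, and the decomposition \eqref{Weyl} of the curvature into Weyl and Schouten pieces. Each of these features is set up (for the projective $|1|$-grading on $\frak{sl}(n+1,\Bbb R)$) in \S\ref{3.1}--\S\ref{3.3}, so the computation should carry over essentially unchanged once one replaces the conformal bracket \eqref{bracket} by its projective analog \eqref{bracket-sl} and uses the projective definition of $\Rho$.

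Concretely, I would unfold $R^{\Cal V}(\xi,\eta)(s)=\nabla^{\Cal V}_\xi\nabla^{\Cal V}_\eta s-\nabla^{\Cal V}_\eta\nabla^{\Cal V}_\xi s-\nabla^{\Cal V}_{[\xi,\eta]}s$ using \eqref{twisted-def2}, decompose $s=\textstyle\sum_j s_j$ with $s_j\in\Ga(\Cal V_jM)$, and analyze the five possible components of $R^{\Cal V}(\xi,\eta)(s_j)$, which live in degrees $j-2,j-1,j,j+1,j+2$. The degree $j\pm 2$ components involve only iterated actions of vectors or covectors and vanish by \eqref{abelian-1}--\eqref{abelian1}. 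The degree $j-1$ component assembles into $(\nabla_\xi\eta-\nabla_\eta\xi-[\xi,\eta])\bullet s_j$ via the Leibniz rule \eqref{Leibniz2} and cancels by torsion-freeness of $\nabla$. The degree $j+1$ component has the same shape with $\eta,\xi$ replaced by $\Rho(\eta),\Rho(\xi)$, and the Leibniz rule together with the defining equation \eqref{Y-def} of $Y=d^\nabla\Rho$ collapses it to $Y(\xi,\eta)\bullet s_j$. In the middle degree $j$, the pure $\nabla$-terms contribute $R(\xi,\eta)\bullet s_j$ via the induced-bundle curvature formula \eqref{R-on-W}, while the four mixed terms between $\eta\bullet$ and $\Rho(\xi)\bullet$ combine, using that $[\frak g_{-1},\frak g_1]\subset\frak g_0$ corresponds to $\{\ ,\ \}$, to yield $\{\eta,\Rho(\xi)\}\bullet s_j-\{\xi,\Rho(\eta)\}\bullet s_j$. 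The projective Weyl decomposition recalled in \S\ref{3.3} then produces $W(\xi,\eta)\bullet s_j$, completing the stated formula.

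For the flatness assertion, the ``if'' direction is immediate from the formula together with the curvature facts in \S\ref{3.3}: projective flatness of $\nabla$ means $W\equiv 0$ (which for $n\geq 3$ is the definition, and for $n=2$ is automatic), and it forces $Y\equiv 0$ in both cases, so $R^{\Cal V}\equiv 0$. For the converse, observe that $W(\xi,\eta)\bullet s_j\in\Ga(\Cal V_jM)$ and $Y(\xi,\eta)\bullet s_j\in\Ga(\Cal V_{j+1}M)$ land in distinct summands of the decomposition of $\Cal VM$, so $R^{\Cal V}=0$ forces $W\bullet=0$ and $Y\bullet=0$ separately; applying this with $\Bbb V$ faithful (e.g.\ the standard representation, whose $\bullet$ actions were worked out in \S\ref{3.2}(1)) recovers $W=0$ and $Y=0$ and hence projective flatness. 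The only point where one needs to be a bit careful — and which I would regard as the main thing to double-check rather than a genuine obstacle — is that the projective Weyl decomposition of $R$ holds with the same sign conventions as \eqref{Weyl}, so that the cross terms in the middle-degree computation really recombine into $W(\xi,\eta)\bullet s_j$; this is precisely what the choice $\Rho=\tfrac1{n-1}\Ric$ and the bracket \eqref{bracket-sl} are designed to deliver.
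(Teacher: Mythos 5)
Your proposal is correct and matches the paper's own treatment: the paper proves Theorem \ref{thm3.3} simply by observing that the proof of the conformal analogue (Theorem \ref{thm2.5}) uses only the formal properties of the operations $\bullet$, $\{\ ,\ \}$, the Leibniz rule, and the Weyl/Schouten decomposition, all of which are set up in the projective case. Your degree-by-degree recapitulation and the remarks on the flatness equivalence are exactly the intended argument, just spelled out in more detail than the paper bothers to.
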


As before, we get the covariant exterior derivatives $d^\nabla$ and $d^{\nabla^{\Cal
    V}}$ that act on $\Om^*(M,\Cal VM)$, which both raise the form-degree by one. The
relation between the two operations is formally exactly as in Proposition
\ref{prop2.6} but using the operations associated to $\frak{sl}(n+1,\Bbb R)$. Hence
we again get a twisted de Rham sequence associated to each irreducible representation
$\Bbb V$ of $SL(n+1,\Bbb R)$ and this is a complex if and only if $\nabla$ is
projectively flat.

\subsection{Cohomology bundles and BGG construction}\label{3.4}
The definition of the cohomology bundles $\Cal H_k^{\Cal V}M$ as a quotient works
exactly as in the conformal case. The operators $\partial$ on $\Cal VM$-valued forms
are induced by bundle maps $\partial:\La^kT^*M\otimes\Cal
VM\to\La^{k+1}T^*M\otimes\Cal VM$ such that $\partial\o\partial=0$. As in \S
\ref{2.7}, we obtain natural subbundles
$\im(\partial)\subset\ker(\partial)\subset\La^kT^*M\otimes\Cal VM$ and we define
$\Cal H_k^{\Cal V}M$ as the quotient $\ker(\partial)/\im(\partial)$. By construction,
this bundle is induced by the representation $H^k(\frak g_{-1},\Bbb V)$ of $SL(n,\Bbb
R)$.

Both the elementary arguments and the deeper use of representation theory discussed
in \S \ref{2.7} have analogs in the projective case. In particular, Kostant's theorem
implies that for $\frak g=\frak{sl}(n+1,\Bbb R)$ each of the cohomology spaces
$H^k(\frak g_{-1},\Bbb V)$ is an irreducible representation of $\frak g_0$ and, as
before, $H^0(\frak g_{-1},\Bbb V)\cong\Bbb V_0$.

There also is an interpretation via subbundles of $\La^kT^*M\otimes\Cal VM$ and for
Levi-Civita connections they can be described exactly as in \S \ref{2.7} via inner
products. For more general connections, there also is a distinguished natural
subbundle $\Upsilon_k\subset\ker(\partial)\subset \La^kT^*M\otimes\Cal VM$ which
projects isomorphically onto $\Cal H_k^{\Cal V}M$, but one has to use an alternative
description. In fact, one can directly define $\frak g_0$-equivariant maps
$\partial^*:\La^kT^*M\otimes\Cal VM\to\La^{k-1}T^*M\otimes\Cal VM$ for each $k$, the
so-called \textit{Kostant codifferential}. As the notation suggests, they are adjoint
to $\partial$ with respect to an inner product of Lie theoretic origin, so they
satisfy $\partial^*\o\partial^*=0$, and one puts
$\Upsilon_k:=\ker(\partial)\cap\ker(\partial^*)$. The explicit formula for
$\partial^*$ is not needed for our purposes, it can be found (in a more general
setting) in \cite{Kostant} or section \ref{3.3} of \cite{book}. In simple situations,
it is easy to directly find the representations
$\im(\partial^*)\subset\ker(\partial^*)$ in each degree from elementary
representation theory arguments, see \S \ref{3.5} below for examples.

The adjointness between $\partial$ and $\partial^*$ implies that $\partial$
restricts to a linear isomorphism $\im(\partial^*)\to\im(\partial)$ and that
$\im(\partial)$ is complementary to $\ker(\partial^*)$. Now one defines bundle maps
$T:\La^kT^*M\otimes\Cal VM\to\La^{k-1}T^*M\otimes\Cal VM$ as the inverse of $\partial$
on $\im(\partial)$ and as zero on $\ker(\partial^*)$. Doing this, it is clear that
the equalities in \eqref{T-comp} also hold here and we also get
$\Upsilon_k=\ker(\partial)\cap\ker(T)$. The first two equalities in \eqref{T-spaces}
have an analog here, namely that $\La^kT^*M\otimes\Cal VM$ can be written as
$\ker(T)\oplus\im(\partial)$ or as $\im(T)\oplus\ker(\partial)$.

At this point, the rest of the BGG construction can be carried out in the current
setting without changes. The operators $G$ and $S$ can be defined by exactly the same
formula as in \S \ref{2.8} and Proposition \ref{prop2.8} holds. This allows us to
define the BGG operators as in the conformal case. In the projectively flat case, the
relation between the twisted de Rham sequence and the BGG sequences is exactly as in
Theorem \ref{thm2.9}. Without assuming projective flatness, we get the obvious analog
of Theorem \ref{thm2.10}.

\subsection{General results and examples}\label{3.5}
With rather obvious changes, the general results from \S \ref{2.12} extend to the
projective setting. One has a bijection between irreducible representations $\Bbb V$
of $SL(n+1,\Bbb R)$ and pairs $(\Bbb V_0,r)$ of an irreducible representation of
$SL(n,\Bbb R)$ and and integer $r\geq 1$, which equals the order of the first BGG
operator in the sequence determined by $\Bbb V$. The bundle $\Cal H_1^{\Cal V}M$ now
corresponds to the maximal irreducible component in $S^rT^*M\otimes\Cal V_0M$.

With the projective versions of all operations involved, the explicit formula for
BGG operators of order $1$ and $2$ from Theorem \ref{thm2.12} hold in the projective
setting without changes. Parallel to the discussion in \S \ref{2.12}, this shows that
one obtains the Killing operator on $1$-forms and, more generally, on
$\binom0\ell$-tensor fields and the Killing-Yano operators on differential forms as
first BGG operators in this setting.

To discuss some explicit examples, consider the standard representation
$\Bbb V:=\Bbb R^{n+1}$ of $SL(n+1,\Bbb R)$. This decomposes as
$\Bbb V_0\oplus\Bbb V_1=\Bbb R^n\oplus\Bbb R$. Here we get
$\partial:\La^k\Bbb R^{n*}\to\La^{k+1}\Bbb R^{n*}\otimes\Bbb R^n$ for $k=0,\dots,n-1$
which is a non-zero multiple of the inclusion of the trace-part, so it is injective
for all $k$ and bijective for $k=n-1$. This means that $\Upsilon_k$ is the kernel of
the contraction in $\La^kT^*M\otimes TM$ while $\Upsilon_n=\La^nT^*M$ (and
$\Upsilon_0=\Cal V_0M=TM$). Consequently, only the last BGG operator is of second
order here, while all other BGG operators have order one.

Sections of $\Upsilon_k$ for $k=1,\dots,n-1$ can be interpreted as $\binom1k$-tensor
fields $\psi$, which are completely alternating in the lower indices and lie in the
kernel of the unique (up to sign) contraction available in this situation. Since
there are just two components in the decomposition of $\Bbb V$, the splitting
operators can be read off the proof of Theorem \ref{thm2.12}. Up to a non-zero
constant, $T:\La^\ell T^*M\otimes TM\to\La^{\ell-1}T^*M$ is the unique
contraction. Since $T(\psi)=0$ we conclude that $T(d^{\nabla}\psi)$ coincides (up to
that factor) with unique nonzero contraction of $\nabla\psi$, which is the natural
analog of the divergence in this situation. This has to be put into the other
component to obtain $S(\psi)$.

In particular, one gets the usual divergences on vector fields for $k=0$ and the
first BGG operator maps $\eta\in\frak X(M)$ to the trace-free part of the
$\binom11$-tensor field $\nabla\eta$. Explicitly,
$D(\eta)=\nabla\eta-\tfrac1{n}\div(\eta)\id$, where $\div(\eta)$ denotes the
divergence of $\eta$.  There is a similar formula for the next BGG operators, which
is easy to derive. For the last BGG operator, the target is one-dimensional and on
the relevant space both $T$ and $\partial$ are zero, so the universal formula in part
(2) of Theorem \ref{thm2.12} simplifies to a linear combination of $d\div(\psi)$ and
$\partial^{\Rho}(\psi)$ for
$\psi\in\Ga(\Upsilon_{n-1})\subset\Om^{n-1}(M,TM)$. Observe that
$\partial^{\Rho}(\psi)$ up to a multiple is just the alternation of
$(\xi_1,\dots,\xi_n)\mapsto \Rho(\xi_1)(\ps(\xi_2,\dots,\xi_n))$.

\smallskip

For the dual $\Bbb W=\Bbb V^*$ of the standard representation, we get
$\Bbb W=\Bbb W_0\oplus\Bbb W_1=\Bbb R\oplus\Bbb R^{n*}$. The maps
$\partial:\La^k\Bbb R^{n*}\otimes\Bbb R^{n*}\to\La^{k+1}\Bbb R^{n*}$ are non-zero
multiples of the alternation, and hence are bijective for $k=0$ and surjective for
all $k=0,\dots,n-1$. Hence sections of $\Upsilon_0$ are just smooth functions,
$\Upsilon_k\subset \La^k\Bbb R^{n*}\otimes\Bbb R^{n*}$ is the kernel of the complete
alternation for $k=1,\dots,n-1$ and $\Upsilon_n\cong\Bbb R^{n*}$. Hence the first BGG
operator is of second order, while all other BGG operators are of first order. In
degree zero, one gets $S(f)=(f,df)$ while in higher degrees the splitting operators
are just inclusions. The first BGG operator is given by $D(f)=\nabla^2f+\Rho f$, and
this is related to Ricci-flat connections that are projectively equivalent to
$\nabla$, see \cite{BEG}. For $k\geq 1$, sections of $\Upsilon_k$ can be viewed as
$\binom0{k+1}$-tensor fields that are alternating in the first $k$ entries, but lie
in the kernel of the complete alternation. For such a tensor field $\psi$, $D\psi$
then is obtained by forming $d^{\nabla}\psi$, i.e.\ alternating $\nabla\psi$ in the
first $k+1$ entries and then projecting to the kernel of the complete alternation.

\subsection*{Funding and acknowledgments}
This research was funded in whole or in part by the Austrian Science Fund (FWF):
10.55776/P33559. For open access purposes, the author has applied a CC BY public
copyright license to any author-accepted manuscript version arising from this
submission.  This article is based upon work from COST Action CaLISTA CA21109
supported by COST (European Cooperation in Science and
Technology). https://www.cost.eu.
  
This article grew out of a plenary lecture at the Second International Conference on
Differential Geometry in Fez (Morocco). I would like to thank the organizers and in
particular Mohamed Abassi for inviting me to give this lecture. I would also like to
thank Kaibo Hu for stimulating discussions on the topic of this article.

\begin{bibdiv}
  \begin{biblist}

    \bib{AFW1}{article}{
   author={Arnold, Douglas N.},
   author={Falk, Richard S.},
   author={Winther, Ragnar},
   title={Finite element exterior calculus, homological techniques, and
   applications},
   journal={Acta Numer.},
   volume={15},
   date={2006},
   pages={1--155},
   issn={0962-4929},
   isbn={0-521-86815-7},
   review={\MR{2269741}},
   doi={10.1017/S0962492906210018},
    }

\bib{AFW2}{article}{
   author={Arnold, Douglas N.},
   author={Falk, Richard S.},
   author={Winther, Ragnar},
   title={Differential complexes and stability of finite element methods.
   II. The elasticity complex},
   conference={
      title={Compatible spatial discretizations},
   },
   book={
      series={IMA Vol. Math. Appl.},
      volume={142},
      publisher={Springer, New York},
   },
   isbn={978-0387-30916-3},
   isbn={0-387-30916-0},
   date={2006},
   pages={47--67},
   review={\MR{2249345}},
   doi={10.1007/0-387-38034-5\_3},
}
    
    \bib{AFW3}{article}{
   author={Arnold, Douglas N.},
   author={Falk, Richard S.},
   author={Winther, Ragnar},
   title={Mixed finite element methods for linear elasticity with weakly
   imposed symmetry},
   journal={Math. Comp.},
   volume={76},
   date={2007},
   number={260},
   pages={1699--1723},
   issn={0025-5718},
   review={\MR{2336264}},
   doi={10.1090/S0025-5718-07-01998-9},
    }

   \bib{Arnold-Hu}{article}{
   author={Arnold, Douglas N.},
   author={Hu, Kaibo},
   title={Complexes from complexes},
   journal={Found. Comput. Math.},
   volume={21},
   date={2021},
   number={6},
   pages={1739--1774},
   issn={1615-3375},
   review={\MR{4343022}},
   doi={10.1007/s10208-021-09498-9},
}  

    \bib{BEastwood}{book}{
   author={Baston, Robert J.},
   author={Eastwood, Michael G.},
   title={The Penrose transform},
   series={Oxford Mathematical Monographs},
   note={Its interaction with representation theory;
   Oxford Science Publications},
   publisher={The Clarendon Press, Oxford University Press, New York},
   date={1989},
   pages={xvi+213},
   isbn={0-19-853565-1},
   review={\MR{1038279}},
}

\bib{BEG}{article}{
   author={Bailey, T. N.},
   author={Eastwood, M. G.},
   author={Gover, A. R.},
   title={Thomas's structure bundle for conformal, projective and related
   structures},
   journal={Rocky Mountain J. Math.},
   volume={24},
   date={1994},
   number={4},
   pages={1191--1217},
   issn={0035-7596},
   review={\MR{1322223 (96e:53016)}},
   doi={10.1216/rmjm/1181072333},
  }

  \bib{BGG}{article}{
   author={Bern{\v{s}}te{\u\i}n, I. N.},
   author={Gel{\cprime}fand, I. M.},
   author={Gel{\cprime}fand, S. I.},
   title={Differential operators on the base affine space and a study of
   ${\germ g}$-modules},
   conference={
      title={Lie groups and their representations},
      address={Proc. Summer School, Bolyai J\'anos Math. Soc., Budapest},
      date={1971},
   },
   book={
      publisher={Halsted, New York},
   },
   date={1975},
   pages={21--64},
   review={\MR{0578996 (58 \#28285)}},
  }

  \bib{BCEG}{article}{
   author={Branson, Thomas},
   author={\v Cap, Andreas},
   author={Eastwood, Michael},
   author={Gover, A. Rod},
   title={Prolongations of geometric overdetermined systems},
   journal={Internat. J. Math.},
   volume={17},
   date={2006},
   number={6},
   pages={641--664},
   issn={0129-167X},
   review={\MR{2246885}},
   doi={10.1142/S0129167X06003655},
}

\bib{BDE}{article}{
   author={Bryant, Robert},
   author={Dunajski, Maciej},
   author={Eastwood, Michael},
   title={Metrisability of two-dimensional projective structures},
   journal={J. Differential Geom.},
   volume={83},
   date={2009},
   number={3},
   pages={465--499},
   issn={0022-040X},
   review={\MR{2581355}},
}

\bib{CDS}{article}{
   author={Calderbank, David M. J.},
   author={Diemer, Tammo},
   author={Sou\v cek, Vladim\'ir},
   title={Ricci-corrected derivatives and invariant differential operators},
   journal={Differential Geom. Appl.},
   volume={23},
   date={2005},
   number={2},
   pages={149--175},
   issn={0926-2245},
   review={\MR{2158042}},
   doi={10.1016/j.difgeo.2004.07.009},
}

\bib{deformations}{article}{
   author={\v Cap, Andreas},
   title={Infinitesimal automorphisms and deformations of parabolic
   geometries},
   journal={J. Eur. Math. Soc. (JEMS)},
   volume={10},
   date={2008},
   number={2},
   pages={415--437},
   issn={1435-9855},
   review={\MR{2390330}},
   doi={10.4171/JEMS/116},
}

\bib{Proj-Comp}{article}{
   author={\v Cap, Andreas},
   author={Gover, A. Rod},
   title={Projective compactifications and Einstein metrics},
   journal={J. Reine Angew. Math.},
   volume={717},
   date={2016},
   pages={47--75},
   issn={0075-4102},
   review={\MR{3530534}},
   doi={10.1515/crelle-2014-0036},
}

\bib{Proj-Comp2}{article}{
   author={\v Cap, Andreas},
   author={Gover, A. Rod},
   title={Projective compactness and conformal boundaries},
   journal={Math. Ann.},
   volume={366},
   date={2016},
   number={3-4},
   pages={1587--1620},
   issn={0025-5831},
   review={\MR{3563246}},
   doi={10.1007/s00208-016-1370-9},
}

\bib{Mass}{article}{
   author={\v Cap, Andreas},
   author={Gover, A. Rod},
   title={A boundary-local mass cocycle and the mass of asymptotically
   hyperbolic manifolds},
   journal={Comm. Math. Phys.},
   volume={405},
   date={2024},
   number={10},
   pages={Paper No. 233, 34},
   issn={0010-3616},
   review={\MR{4797744}},
   doi={10.1007/s00220-024-05079-3},
}

\bib{weak}{article}{
   author={\v Cap, Andreas},
   author={Hu, Kaibo},
   title={BGG sequences with weak regularity and applications},
   journal={Found. Comput. Math.},
   volume={24},
   date={2024},
   number={4},
   pages={1145--1184},
   issn={1615-3375},
   review={\MR{4783639}},
   doi={10.1007/s10208-023-09608-9},
}

\bib{Poincare}{article}{
   author={\v Cap, Andreas},
   author={Hu, Kaibo},
   title={Bounded Poincar\'e{} operators for twisted and BGG complexes},
   language={English, with English and French summaries},
   journal={J. Math. Pures Appl. (9)},
   volume={179},
   date={2023},
   pages={253--276},
   issn={0021-7824},
   review={\MR{4659286}},
   doi={10.1016/j.matpur.2023.09.008},
}

\bib{book}{book}{
   author={{\v{C}}ap, Andreas},
   author={Slov{\'a}k, Jan},
   title={Parabolic geometries. I},
   series={Mathematical Surveys and Monographs},
   volume={154},
   note={Background and general theory},
   publisher={American Mathematical Society},
   place={Providence, RI},
   date={2009},
   pages={x+628},
   isbn={978-0-8218-2681-2},
   review={\MR{2532439 (2010j:53037)}},
}

\bib{AHS3}{article}{
   author={\v Cap, Andreas},
   author={Slov\'ak, Jan},
   author={Sou\v cek, Vladim\'ir},
   title={Invariant operators on manifolds with almost Hermitian symmetric
   structures. III. Standard operators},
   journal={Differential Geom. Appl.},
   volume={12},
   date={2000},
   number={1},
   pages={51--84},
   issn={0926-2245},
   review={\MR{1757020}},
   doi={10.1016/S0926-2245(00)00003-6},
}

\bib{CSS-BGG}{article}{
   author={{\v{C}}ap, Andreas},
   author={Slov{\'a}k, Jan},
   author={Sou{\v{c}}ek, Vladim{\'{\i}}r},
   title={Bernstein-Gelfand-Gelfand sequences},
   journal={Ann. of Math.},
   volume={154},
   date={2001},
   number={1},
   pages={97--113},
   issn={0003-486X},
   review={\MR{1847589 (2002h:58034)}},
}

\bib{MikE-elasticity}{article}{
   author={Eastwood, Michael},
   title={A complex from linear elasticity},
   booktitle={The Proceedings of the 19th Winter School ``Geometry and
   Physics'' (Srn\'i, 1999)},
   journal={Rend. Circ. Mat. Palermo (2) Suppl.},
   number={63},
   date={2000},
   pages={23--29},
   issn={1592-9531},
   review={\MR{1758075}},
}

\bib{EM}{article}{
   author={Eastwood, Michael},
   author={Matveev, Vladimir},
   title={Metric connections in projective differential geometry},
   conference={
      title={Symmetries and overdetermined systems of partial differential
      equations},
   },
   book={
      series={IMA Vol. Math. Appl.},
      volume={144},
      publisher={Springer, New York},
   },
   isbn={978-0-387-73830-7},
   date={2008},
   pages={339--350},
   review={\MR{2384718}},
   doi={10.1007/978-0-387-73831-4\_16},
}

 \bib{Eastwood-Rice}{article}{
   author={Eastwood, Michael G.},
   author={Rice, John W.},
   title={Conformally invariant differential operators on Minkowski space
   and their curved analogues},
   journal={Comm. Math. Phys.},
   volume={109},
   date={1987},
   number={2},
   pages={207--228},
   issn={0010-3616},
   review={\MR{0880414}},
 }

 \bib{GLW}{article}{
   author={Gover, A. Rod},
   author={Latini, Emanuele},
   author={Waldron, Andrew},
   title={Poincar\'e-Einstein holography for forms via conformal geometry in
   the bulk},
   journal={Mem. Amer. Math. Soc.},
   volume={235},
   date={2015},
   number={1106},
   pages={vi+95},
   issn={0065-9266},
   isbn={978-1-4704-1092-6; 978-1-4704-2224-0},
   review={\MR{3338300}},
   doi={10.1090/memo/1106},
 }

 \bib{Gover-Silhan}{article}{
   author={Gover, A. Rod},
   author={\v Silhan, Josef},
   title={The conformal Killing equation on forms---prolongations and
   applications},
   journal={Differential Geom. Appl.},
   volume={26},
   date={2008},
   number={3},
   pages={244--266},
   issn={0926-2245},
   review={\MR{2421703}},
   doi={10.1016/j.difgeo.2007.11.014},
}

 \bib{GW}{article}{
   author={Gover, A. Rod},
   author={Waldron, Andrew},
   title={Boundary calculus for conformally compact manifolds},
   journal={Indiana Univ. Math. J.},
   volume={63},
   date={2014},
   number={1},
   pages={119--163},
   issn={0022-2518},
   review={\MR{3218267}},
   doi={10.1512/iumj.2014.63.5057},
 }

 \bib{prolongation}{article}{
   author={Hammerl, Matthias},
   author={Somberg, Petr},
   author={Sou\v cek, Vladim\'ir},
   author={\v Silhan, Josef},
   title={On a new normalization for tractor covariant derivatives},
   journal={J. Eur. Math. Soc. (JEMS)},
   volume={14},
   date={2012},
   number={6},
   pages={1859--1883},
   issn={1435-9855},
   review={\MR{2984590}},
   doi={10.4171/JEMS/349},
}

\bib{Kostant}{article}{
   author={Kostant, Bertram},
   title={Lie algebra cohomology and the generalized Borel-Weil theorem},
   journal={Ann. of Math. (2)},
   volume={74},
   date={1961},
   pages={329--387},
   issn={0003-486X},
   review={\MR{0142696 (26 \#265)}},
}

\bib{Leitner}{article}{
   author={Leitner, Felipe},
   title={Conformal Killing forms with normalisation condition},
   journal={Rend. Circ. Mat. Palermo (2) Suppl.},
   number={75},
   date={2005},
   pages={279--292},
   issn={1592-9531},
   review={\MR{2152367}},
 }

\bib{Lepowsky}{article}{
   author={Lepowsky, J.},
   title={A generalization of the Bernstein-Gelfand-Gelfand resolution},
   journal={J. Algebra},
   volume={49},
   date={1977},
   number={2},
   pages={496--511},
   issn={0021-8693},
   review={\MR{0476813 (57 \#16367)}},
}
 
\bib{Michor:topics}{book}{
   author={Michor, Peter W.},
   title={Topics in differential geometry},
   series={Graduate Studies in Mathematics},
   volume={93},
   publisher={American Mathematical Society, Providence, RI},
   date={2008},
   pages={xii+494},
   isbn={978-0-8218-2003-2},
   review={\MR{2428390}},
   doi={10.1090/gsm/093},
}

\bib{SloSou}{article}{
   author={Slov\'ak, Jan},
   author={Sou\v cek, Vladim\'ir},
   title={Invariant operators of the first order on manifolds with a given
   parabolic structure},
   language={English, with English and French summaries},
   conference={
      title={Global analysis and harmonic analysis},
      address={Marseille-Luminy},
      date={1999},
   },
   book={
      series={S\'emin. Congr.},
      volume={4},
      publisher={Soc. Math. France, Paris},
   },
   isbn={2-85629-094-9},
   date={2000},
   pages={251--276},
   review={\MR{1822364}},
}

\end{biblist}
\end{bibdiv}

\end{document}